\def\WITHAPP{WITHAPP}
\def\NOAPP{NOAPP}

\def\form{WITHAPP}

\documentclass[12pt]{amsart}

\usepackage[utf8]{inputenc}
\usepackage{color}
\usepackage[centertags]{amsmath}
\usepackage[margin=1in]{geometry}
\usepackage{amsfonts}
\usepackage{amsthm}
\usepackage{caption}
\usepackage{newlfont}
\usepackage{verbatim}
\usepackage{amscd}
\usepackage{amsgen}
\usepackage{xspace}
\usepackage{amssymb}
\usepackage{stmaryrd}
\usepackage{amssymb,amsmath}
\usepackage{amscd}
\usepackage{enumitem}
\usepackage{color}
\usepackage{xy}
\usepackage{graphicx}
\usepackage{tikz}
\usetikzlibrary{arrows,shapes,trees}
\usetikzlibrary{backgrounds}
\usetikzlibrary{positioning}



\newlength{\defbaselineskip} \setlength{\defbaselineskip}{\baselineskip}
\theoremstyle{plain}
\newtheorem{thm}{Theorem}[section]
\newtheorem{cor}[thm]{Corollary}
\newtheorem{con}[thm]{Conjecture}
\newtheorem{lema}[thm]{Lemma}
\newtheorem{prop}[thm]{Proposition}
\newtheorem{obs}[thm]{Observation}

\theoremstyle{definition}
\newtheorem{df}[thm]{Definition}
\newtheorem{exm}[thm]{Example}
\newtheorem{rem}[thm]{Remark}

\newtheorem{fact}[thm]{Fact}

\newtheorem{pr}{Algorithm}

\numberwithin{equation}{section}
\def\p{\mathbb{P}}

\def\RR{\mathbb{R}}

\def\Z{\mathbb{Z}}
\def\Q{\mathbb{Q}}
\def\N{\mathbb{N}}

\def\C{\mathbb{C}}

 \DeclareMathOperator{\Proj}{Proj}

\DeclareMathOperator{\Spec}{Spec}
\def\p{\mathbb{P}}

\def\ob{\begin{obs}}
\def\kob{\end{obs}}
\def\dow{\begin{proof}}
\def\kdow{\end{proof}}

\def\tw{\begin{thm}}
\def\ktw{\end{thm}}
\def\hip{\begin{con}}
\def\khip{\end{con}}
\def\lem{\begin{lema}}
\def\klem{\end{lema}}
\def\ex{\begin{exm}}
\def\prog{\begin{pr}}
\def\kprog{\end{pr}}
\def\wn{\begin{cor}}
\def\kwn{\end{cor}}
\def\uwa{\begin{rem}}
\def\kuwa{\end{rem}}
\def\kex{\end{exm}}
\def\dfi{\begin{df}}
\def\kdfi{\end{df}}
\def\W{\mathcal W}
\setcounter{section}{0}

\definecolor{zielony}{rgb}{0.5, 0.9, 0.1}
\definecolor{czerwony}{rgb}{0.9, 0.2, 0.1}
\definecolor{niebieski}{rgb}{0.3, 0.1, 0.9}

\newcommand{\barvinok}{\textsc{barvinok}\xspace}
\newcommand{\iscc}{\textsc{iscc}\xspace}
\newcommand{\isl}{\textsc{isl}\xspace}
\xyoption{line}
\def\fa{\begin{fact}}
\def\kfa{\end{fact}}
\DeclareMathOperator{\sgn}{sgn}
\DeclareMathOperator{\vol}{Vol}

\usepackage{url}
\usepackage{hyperref}
\definecolor{darkblue}{RGB}{0,0,160}
\hypersetup{
colorlinks,%
citecolor=black,%
filecolor=black,%
linkcolor=darkblue,%
urlcolor=darkblue
}

\title{Plethysm and lattice point counting}

\author{Thomas Kahle}
\address{Fakult\"at f\"ur Mathematik, Otto-von-Guericke Universit\"at\\39106 Magdeburg, Germany}
\urladdr{\url{http://www.thomas-kahle.de}}

\author{Mateusz Micha\l ek} \address{Polish Academy of Sciences,
\'Sniadeckich 8, 00-656 Warsaw, Poland, and Simons Institute for the
Theory of Computing, 121 Calvin Lab 2190, UC Berkeley, CA 94720, USA }
\email{wajcha2@poczta.onet.pl}

\thanks{Mateusz Micha\l ek is supported by Polish National Science Center grant
no. 2012/05/D/ST1/01063}
\makeatletter
  \@namedef{subjclassname@2010}{\textup{2010} Mathematics Subject Classification}
\makeatother
\subjclass[2010]{Primary: 20G05, 11P21; Secondary: 11H06, 05A16,
52B20, 52B55, 20C15}

\date{August 2015}

\begin{document}
\begin{abstract}
We apply lattice point counting methods to compute the multiplicities
in the plethysm of $GL(n)$. Our approach gives insight into the
asymptotic growth of the plethysm and makes the problem amenable to
computer algebra.  We prove an old conjecture of Howe on the leading
term of plethysm.  For any partition $\mu$ of $3,4$, or~$5$ we obtain
an explicit formula in $\lambda$ and $k$ for the multiplicity of
$S^\lambda$ in~$S^\mu(S^k)$.
\end{abstract}

\maketitle

\setcounter{tocdepth}{1}
\tableofcontents

\section{Introduction}
The \emph{plethysm} problem can be stated in different ways.  One is
to describe the homogeneous polynomials on the spaces $S^k W^*$ and
$\bigwedge^k W^*$ in terms of representations of the group~$GL(W)$.
This is equivalent to decomposing $S^d(S^k W)$ into isotypic
components and finding the multiplicity of each isotypic component.
The general goal in plethysm is to determine the coefficients of
$S^\lambda$ in $S^\mu(S^\nu W)$ as a function of the partitions
$\lambda,\mu$, and~$\nu$.  The term plethysm was coined by
Littlewood~\cite{littlewood1936polynomial}, and this type of problems
appears in many branches of mathematics beyond representation theory
(consult~\cite{loehr2011computational} for some recent developments in
\emph{plethystic calculus}).  A general explicit solution of plethysm
may be intractable as the resulting formulas are simply too
complicated.  Here we show piecewise quasi-polynomial formulas that
describe the plethysm and then focus on two directions.  One is
explicit descriptions for small $\mu$ which we find with the help of
computer algebra.  The other direction is asymptotics of plethysm
where we confirm a conjecture of Howe~\cite[3.6(d)]{Howe87} on the
lead term (Theorem~\ref{thm:asympt}).

Our contributions are summarized in the following theorem.  The proof
of the formula is complete after Section~\ref{sec:reductions}, while
the asymptotics is dealt with in Section~\ref{sec:asymptotic}.
\begin{thm}\label{t:big} Let $\mu$ be a fixed partition, $k$ a
natural number, and let $\lambda$ be a partition of~$k|\mu|$.  The
multiplicity of the isotypic component of $S^\mu(S^k W)$ corresponding
to $\lambda$, as a function of $\lambda$ and $k$, is the following
piecewise quasi-polynomial:
\begin{equation*}
\frac{\dim \mu}{|\mu|!}\# P_{k,|\mu|}^\lambda+
(-1)^{{|\mu|-1}\choose{2}}\Bigg(\sum_{\alpha\vdash |\mu|, \alpha\neq (1,\dots,1)}\chi_\mu(\alpha)\frac{D_\alpha}{|\mu|!}\sum_{\pi\in
S_{|\mu|-1}}\sgn(\pi) Q_\alpha(k,\lambda_\pi)\Bigg),
\end{equation*}
where $P_{k,|\mu|}^\lambda$ is an explicit polytope, $\chi_\mu$ is the
character of the symmetric group $S_{|\mu|}$ corresponding to the
partition $\mu$, the $Q_\alpha$ are counting functions for the fibers
of projections of explicit polyhedral cones, and $\lambda_\pi$ is a
linear shift of~$\lambda$. Moreover,
$\frac{\dim \mu}{|\mu|!}\# P_{k,|\mu|}^\lambda$ is the leading term
and can be interpreted as coming from the Littlewood--Richardson rule.
When $\mu$ is any partition of 3, 4, or~5, the explicit piecewise
quasi-polynomials have been computed and can be downloaded from the
project homepage in \isl format.
\end{thm}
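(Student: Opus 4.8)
The starting point is the classical realization of $S^\mu(S^k W)$ via symmetric functions: the multiplicity of $S^\lambda$ in $S^\mu(S^k W)$ equals $\langle s_\lambda,\, h_k \circ \text{(character machinery for }\mu)\rangle$, which by Frobenius reciprocity on $S_{|\mu|}$ becomes
\[
\frac{1}{|\mu|!}\sum_{\alpha \vdash |\mu|} \chi_\mu(\alpha)\, \langle s_\lambda,\, p_\alpha[h_k]\rangle,
\]
where $p_\alpha[h_k] = \prod_i p_{\alpha_i}[h_k]$ is the plethysm of power sums into $h_k$. So the first step is to open this sum over conjugacy classes $\alpha$ of $S_{|\mu|}$ and treat each term $\langle s_\lambda,\, p_\alpha[h_k]\rangle$ separately. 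The term $\alpha = (1,\dots,1)$ gives $p_1^{|\mu|}[h_k] = h_k^{|\mu|}$, whose Schur expansion coefficients $\langle s_\lambda, h_k^{|\mu|}\rangle$ count, by the iterated Pieri/Littlewood--Richardson rule, lattice points in an explicit polytope --- this is exactly where $\#P_{k,|\mu|}^\lambda$ and the coefficient $\frac{\dim\mu}{|\mu|!} = \frac{\chi_\mu(1,\dots,1)}{|\mu|!}$ come from, and it will be the leading term because it is the only term that is genuinely a count of a full-dimensional family of $LR$-tableaux.

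The second and main step is to handle $\alpha \neq (1,\dots,1)$, i.e.\ to give a lattice-point-counting expression for $\langle s_\lambda,\, p_\alpha[h_k]\rangle$. I would use the known combinatorial description of $p_r[h_k]$ (equivalently, of $p_r \circ h_k$) in terms of ribbon tableaux or, more efficiently, via the Jacobi--Trudi/determinantal expansion combined with the fact that $p_r$ acts on Schur functions by adding border strips (the Murnaghan--Nakayama rule applied ``in reverse'', creating a signed sum $(-1)^{\text{ht}}$). Iterating over the parts of $\alpha$ produces the sign $(-1)^{\binom{|\mu|-1}{2}}$ (the cumulative height count, which for a fixed $|\mu|$ is a constant once the shape is pinned down by the degree) and the constants $D_\alpha$. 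The resulting object is a signed count of integer points in the fiber over $\lambda$ of a projection of a polyhedral cone; antisymmetrizing over $\lambda$ (the Weyl-denominator-style alternation, producing the $\sum_{\pi\in S_{|\mu|-1}}\sgn(\pi)$ with the shift $\lambda_\pi$) converts the ``$\leq$'' inequalities defining dominance into the clean cone description and yields the functions $Q_\alpha(k,\lambda_\pi)$. Each $Q_\alpha$ is then piecewise quasi-polynomial in $(k,\lambda)$ by Barvinok's theorem on parametric lattice point counting in rational polyhedra, which also makes the whole expression piecewise quasi-polynomial.

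The third step is the ``moreover'' claims. That $\frac{\dim\mu}{|\mu|!}\#P_{k,|\mu|}^\lambda$ is the leading term requires a dimension/degree comparison: $P_{k,|\mu|}^\lambda$ has dimension equal to the number of free $LR$-parameters, which strictly exceeds the dimension of the fibers computed by the $Q_\alpha$ for every $\alpha\neq(1,\dots,1)$, since a nontrivial cycle in $\alpha$ imposes a border-strip (connected, hence rigid in one direction) constraint that drops the dimension; this should follow from counting the defining equalities of the cone for $Q_\alpha$. The $LR$-interpretation of the leading term is then literally the Pieri-chain reading of $h_k^{|\mu|}$. Finally, the explicit computations for $|\mu|\in\{3,4,5\}$ are a finite check: for each such $\mu$ one instantiates the polytope $P_{k,|\mu|}^\lambda$ and the finitely many cones for the $Q_\alpha$, feeds them to \barvinok/\isl, and records the output; no further proof is needed beyond verifying the inputs match the formula.

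The hard part will be Step~2: pinning down precisely which polyhedral cones arise from the plethysm $p_\alpha[h_k]$ and checking that the messy alternating/signed border-strip bookkeeping collapses exactly to the stated constant sign $(-1)^{\binom{|\mu|-1}{2}}$ and the constants $D_\alpha$, rather than to something that depends on $\lambda$ or $k$. Getting the chamber structure right (so that ``piecewise quasi-polynomial'' is not vacuous and the pieces glue) and proving the strict dimension drop for the leading-term claim are the other places where care is needed; everything downstream of the cone description is an appeal to Barvinok's algorithm and to standard symmetric-function identities.
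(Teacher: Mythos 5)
Your first step coincides with the paper's (Proposition~\ref{prop:plethysmChar}): expand the character of $S^\mu(S^kW)$ over cycle types, with coefficients $\chi_\mu(\alpha)D_\alpha/|\mu|!$ --- note that $D_\alpha$ already appears here, as $1/z_\alpha$, and has nothing to do with any later border-strip bookkeeping. Your Step~2, however, conflates two different mechanisms. The paper deliberately does \emph{not} expand $\psi_\alpha\circ h_k$ into Schur functions by Murnaghan--Nakayama/ribbon combinatorics (cf.\ Remark~\ref{r:yangKostka}); it expands it into \emph{monomials}: the coefficient of $x^\lambda$ is the number of $(\alpha,\lambda)$-matrices (Definition~\ref{def:almatrix}), a transportation-type count $Q_\alpha$, and the Schur coefficient is then extracted as $[\Delta(x)(\psi_\alpha\circ h_k)]_{\lambda+\delta}$. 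The alternation $\sum_{\pi\in S_{|\mu|-1}}\sgn(\pi)Q_\alpha(k,\lambda_\pi)$ and the constant sign $(-1)^{\binom{|\mu|-1}{2}}$ come solely from writing $\prod_{i<j}(x_i-x_j)=(-1)^{\binom{|\mu|-1}{2}}\prod_{i<j}(x_j-x_i)$; there is no height-dependent sign to ``collapse.'' If you instead expand via border strips you already have a Schur expansion, and the subsequent Weyl-denominator antisymmetrization you describe is redundant; moreover the Murnaghan--Nakayama signs genuinely depend on $\lambda$, so the collapse you flag as the hard part is not a gap to be closed but a symptom of mixing two routes. You also omit the reduction via Proposition~\ref{prop:dualities} and Lemma~\ref{lem:redukcja} to partitions with at most $|\mu|-1$ rows, which is exactly why the alternation runs over $S_{|\mu|-1}$ (in $|\mu|-1$ variables) and why the shifts $\lambda_\pi$ have the stated form.

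The genuine gap is the leading-term claim. Your argument --- that $\dim P^\lambda_{k,|\mu|}$ strictly exceeds the fiber dimensions attached to $Q_\alpha$ for $\alpha\neq(1,\dots,1)$, to be checked ``by counting the defining equalities of the cone'' --- fails twice over. First, because of the signed sum over $\pi$, the degree of a summand is \emph{not} the dimension of the underlying polytope; the whole difficulty, stated explicitly in Section~\ref{sec:asymptotic}, is to control this cancellation. Second, the dimension inequality itself is false for the polytopes that actually occur: $\dim P_{\alpha,\lambda}=(a-1)(l-1)$, so for $\alpha=(2,1,\dots,1)$ with $a=d-1$ parts and regular $\lambda$ with $l\geq 3$ parts one has $(d-2)(l-1)>(l-1)(d-l/2-1)=\dim P^\lambda_{k,d}$. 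The paper's proof of Lemma~\ref{l:qpolyasymptotics} supplies the missing idea: split $\alpha$ into its singleton parts ($h$ of them) and the rest, recognize the inner alternating sum $\sum_\pi\sgn(\pi)Q_{\alpha_0}(\cdot)$ as an honest (hence nonnegative) multiplicity in $(S^{sk})^{\otimes h}$, bound it by the total multiplicity $\mathfrak{s}_l((S^{sk})^{\otimes h})$ of components with at most $l$ rows, and finish by a case analysis ($h\geq l$ via Pieri parameters, $h<l$ via the Fulger--Zhou degree bound $\binom{h}{2}$), giving strictly smaller $s$-degree than $(l-1)(d-l/2-1)$; Theorem~\ref{thm:asympt} then follows for every full-dimensional conical chamber. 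Without some substitute for this positivity argument, the ``moreover'' part of the theorem is unproved in your plan.
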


Theorem~\ref{t:big} yields explicit formulas for plethysm that
generalize known results in the cases $|\mu| = 2,3,4$.  Although these
formulas are not necessarily practical to work with on paper,
computers are quick to evaluate them, study their asymptotics, and
generally extract different sorts of information from them.  In this
sense, Theorem~\ref{t:big} is more effective (but maybe less
instructive) than approaches by tableaux counting such
as~\cite{rush2014cyclic}. Although its individual constituents are
quasi-polynomials whose chambers are cones, this cannot be guaranteed
for the whole expression solely from the formula in
Theorem~\ref{t:big}.  We discuss this in Remarks~\ref{r:chambers}
and~\ref{r:symplectic}.

To arrive at the theorem, we first compute the character of the
representation~$S^\mu(S^k W)$.  Using known formulas relating Schur
polynomials and complete symmetric polynomials, we relate the
multiplicities of isotypic components of the plethysm to coefficients
of monomials of a specific polynomial
(Propositions~\ref{prop:plethysmChar} and~\ref{prop:expansion_coeff},
Section~\ref{s:latticepoints}).  We then reduce the determination of
these coefficients to a purely combinatorial problem: lattice point
counting in certain rational polytopes related to transportation
polytopes (Definition~\ref{def:almatrix}).  For fixed $\mu$, the final
multiplicity is a function of $\lambda_1,\dots,\lambda_{|\mu|}$
and~$k$.  These arguments may belong to a finite number of polyhedral
chambers.  In each chamber, the result is a quasi-polynomial, that is,
a polynomial with coefficients that depend on the remainders of its
arguments modulo a fixed number.  Equivalently, it is a polynomial in
floor functions of linear expressions in the arguments.  Software to
determine piecewise quasi-polynomials is well-developed due to
applications ranging from toric geometry to loop optimization in
compiler research.  We show how to use
\barvinok~\cite{verdoolaege2007counting} and the
\isl-library~\cite{verdoolaege2010isl} to make Theorem~\ref{t:big}
explicit.  This yields a concrete decomposition of $S^\mu(S^k W)$ (and
$S^\mu(\bigwedge^k W)$) for any partition $\mu$ of 3, 4, or~5.  For
each fixed $\mu$, the result is a decomposition of
$(\lambda, k)$-space into polyhedral chambers, such that in each
chamber the multiplicity is a quasi-polynomial.  We have set up a
homepage for the results in this paper~at
\begin{center}
\url{http://www.thomas-kahle.de/plethysm.html}
\end{center}
\ifx\form\NOAPP The arXiv version of this paper has an appendix where
we detail our experiences with the software.\fi\ifx\form\WITHAPP In
the appendix (Section~\ref{sec:appendix}) we detail our experiences
with the software.\fi\xspace Our computations have been carried out
with version 0.37 of \barvinok and version 0.13 of~\isl.

Before presenting our methods, we now give a short overview of
applications of our results as well as different approaches.
\begin{description}[leftmargin=0cm, itemsep=1ex]
\item[Classical results] Our results extend classical theory.  For
example, the description of quadrics on the space $S^k(W^*)$ is a
classical result of Thrall~\cite{Thrall42}, \cite[4.1--4.6]{CGR83}.
\begin{exm}\label{ex:drugasym}
One has $GL(W)$-module decompositions
\[
S^2(S^k W)=\bigoplus S^\lambda W,\qquad
\textstyle\bigwedge^2\displaystyle(S^k W)=\bigoplus S^{\delta} W,
\]
where the first sum runs over representations corresponding partitions
$\lambda$ of $2k$ into two even parts and the second
sum runs over representations corresponding to partitions $\delta$ of
$2k$ into two odd parts.
\end{exm}
The decomposition of cubics $S^3(S^k W)$ is also known. Stated in
different forms, it can be found in \cite{Thrall42, MR0294526, CGR83,
Howe87, agaoka2002decomposition}. In fact, the latter four have
formulas for $S^\mu(S^k W)$ for any partition $\mu$ of~$3$.  The
determination of $S^4(S^k)$ has been addressed
in~\cite{MR0062098,MR0051212, Howe87}.

\item[Asymptotics] The explicit formulas for plethysm become
complicated quickly, but there is hope for simpler asymptotic
formulas.  For instance, the decomposition of $S^d(S^k W)$ is related
to $(S^k W)^{\otimes d}$ by means of the symmetrizing operator
$(S^k W)^{\otimes d}\rightarrow S^d(S^k W)$.  There the decomposition
of the domain of the resulting quasi-polynomial is known from Pieri's
(or more generally the Littlewood--Richardson) rule.  In the same
vein, Howe \cite[3.6(d)]{Howe87} identified the leading term for
$S^3(S^k)$ and $S^4(S^k)$.  A different approach by Fulger and
Zhou~\cite{fulger2012asymptotic} studies the asymptotics of plethysm
by considering how many different irreducible representations and
which sums of multiplicities can appear.  Further asymptotic results,
e.g.,~when the inner Schur functor is fixed, are presented
in~\cite{christandl2014eigenvalue}.  They are achieved through a
connection to the commutation of quantization and
reduction~\cite{sjamaar1995holomorphic, meinrenken1999singular,
meinrenken1996riemann}.

Knowledge of explicit quasi-polynomial formulas allows one to test
techniques for studying the asymptotics algorithmically on nontrivial
examples.  Another insight from Section~\ref{sec:asymptotic} is that
the language of convex discrete geometry may be more useful for proofs
than that of piecewise quasi-polynomials.

\item[Evaluation] One of the principal uses of our results is
evaluation of the plethysm function.  While evaluation for individual
values can be done in \verb|LiE|~\cite{van1992lie} and other packages,
our results are more flexible as they are given as functions on
parameter space and can thus be evaluated parametrically.

\begin{exm}\label{ex:largeeval}
Let $\mu=(5)$, $\lambda=(31,3,2,2,2)$, and make the following
definitions:
\begin{gather*}
p_1 = - \frac{289}{720} s + \frac{1}{20} s^2 + \frac{1}{720} s^3\\
p_2 = \frac{5}{8} + \frac{1}{8} s, \qquad
p_3 = \frac{1}{3} - \frac{1}{6} s, \qquad
p_4 = \frac{7}{12} - \frac{1}{3} s,  \\
A(s) = p_1 + p_2 \left\lfloor \frac{s}{2}\right\rfloor + p_3
\left\lfloor\frac{s}{3}\right\rfloor + \left( p_4 + \frac{1}{2}
  \left\lfloor\frac{s}{3}\right\rfloor \right)
\left\lfloor\frac{1+s}{3}\right\rfloor + \frac{1}{4} \left(
  \left\lfloor\frac{1 + s}{3}\right\rfloor^2 +
  \left\lfloor\frac{s}{4}\right\rfloor - \left\lfloor\frac{3 +
    s}{4}\right\rfloor \right)
\end{gather*}
With these definitions the coefficient of $S^{s\lambda}$ in
$S^\mu(S^{8s})$ equals
\begin{equation*}
A(s) +
\begin{cases}
1 & \text{ if } s \equiv 0 \mod 5\\
\frac{3}{5} & \text{ if } s \equiv 1 \mod 5\\
\frac{4}{5} & \text{ if } s \equiv 2,3,4 \mod 5,
\end{cases}
\end{equation*}
Note that after the reductions in Section~\ref{sec:reductions},
Lemma~\ref{l:qpolyasymptotics} with $l=2, d=5$ gives a degree bound of
three, which is realized here.  Also note that the result is a single
quasi-polynomial (see Remark~\ref{r:chambers}).
\end{exm}

\begin{rem}\label{rem:tricky}
It can be slightly tricky to automatically extract these kinds of
formulas from our computational results.  In general, \iscc has
somewhat limited capabilities in producing human-readable output.
\ifx\form\WITHAPP In Example~\ref{ex:case-study} \fi \ifx\form\NOAPP
In the appendix of the arXiv version of this paper, \fi we give a
complete discussion of how to derive this result using our
computational results and~\iscc.
\end{rem}

\begin{exm}
Actual numerical evaluation is very quick too.  For instance, the
multiplicity of the isotypic component of
$$\lambda=(616 036 908 677 580 244,1 234 567 812 345 678,12 345 671
234 567,123 456 123 456)$$
in $S^5(S^{123 456 789 123 456 789})$ equals \small
\begin{equation*}
24 096 357 040 623 527 797 673 915 801 061 590 529 381 724 384 546 352 415 930 440 743 659 968 070 016 051.
\end{equation*} \normalsize
The evaluation of our formula on this example takes under one second,
and this time is almost entirely constant overhead for dealing with
the data structure.  Evaluation on much larger arguments (for instance
with a million digits) is almost as quick.
\end{exm}

\item[Quantum physics] Descriptions of entangled quantum states of
bosons and fermions are related to plethysms of symmetric and wedge
product (see~\cite{christandl2014eigenvalue,
christandl2012computing}).

\item[Testing conjectures] Although many theoretical formulas for
plethysm are known, some basic properties are mysterious.  For
example, a conjecture of Foulkes states that for $a < b$,
$S^{a}(S^{b})$ embeds as a subrepresentation into $S^{b}(S^{a})$.  For
$a=2$, this is a classical result. For $a=3$, it was shown in this
century~\cite{Dent2000236} and for $a=4$ in~\cite{TomMcKay2007}.  A
variant has been studied in~\cite{abdesselam2007brill}.  Using
explicit quasi-polynomials, one can attack this problem for any
fixed~$a$.  One would need to compare two explicit quasi-polynomials
and in particular decide whether their difference is a positive
quasi-polynomial.  At the moment, we cannot complete this direction as
our methods only work for fixed exponent of the outer Schur functor.
Results of Bedratyuk \cite{bedratyuk} indicate that explicit
quasi-polynomials can also be found for fixed exponent of the inner
Schur functor, once we fix the group (to be $SL(n)$). Our result can
also be considered as a step toward Stanley's Problem 9 in
\cite{Stanley} asking for the combinatorial description of plethysm.
For this major breakthrough one would need ``positive'' formulas,
though.

\item[The zero locus of plethysm coefficients] The question of which
isotypic components appear in plethysm is highly nontrivial. Some very
special cases follow from the resolution of Weintraub's conjecture
\cite{MR1037395,MR2745569,manivel2012effective}.  We hope that our
formulas can contribute to finding further regularities among
partitions that appear in different plethysms, for instance, by
studying the zeros of our quasi-polynomials.

\item[Geometric Complexity Theory] The problem of separation of
complexity classes is addressed with geometric methods
in~\cite{mulmuley2001geometric, burgisser2011overview}.  The crucial
point of this program requires comparing closures of orbits of
explicit symmetric tensors. The plethysm plays an important role
there~\cite[p.~516]{mulmuley2001geometric},
\cite[p.~10]{burgisser2011overview},
\cite[p.~10]{landsberg2014geometric}.

\item[Computation of syzygies] The computations of syzygies of
homogeneous varieties is related to (inner) plethysm (see
\cite[p.~63]{Weyman}).  Weyman \cite[p.~241]{Weyman} applies the
explicit computation of plethysm $S^n(S^2)$ to study rank varieties
\cite[Section~7.1]{Weyman} and topics related to free resolution of
the Grassmannian.

\item[Unification] Many specialized methods have been developed to
attack plethysm problems, and most of them work only in a very
restricted set of exponents.  For instance, we have already mentioned
several methods to compute $S^3(S^k)$ \cite{Thrall42, MR0294526,
CGR83, Howe87, agaoka2002decomposition}.  Quasi-polynomials and convex
bodies provide a unifying framework for all of those
techniques. Specializing to certain situations is just partially
evaluating the quasi-polynomial.  In the history of plethysm, people
have written a new paper with a new technique whenever the next
exponent was due.  Our method, in contrast, stays the same.

\item[Representations of $S_n$] The plethysm is also related to
representations of the permutation group~$S_n$.  For two
representations of $S_a$ and $S_b$ corresponding to $\lambda$ and
$\mu$, respectively, there is a wreath product representation
$\lambda \wr \mu$ of the wreath product group $S_a\wr S_b$.  One has a
natural inclusion $S_a\wr S_b\subset S_{ab}$.  By the Frobenius
characteristic map, we can identify representations of symmetric
groups with symmetric polynomials, and under this identification, the
representation of $S_{ab}$ induced from $\lambda\wr\mu$ is exactly the
plethysm of the representations given by $\lambda$ and $\mu$.  The
interested reader may consult \cite[Vol. 2, Theorem A2.6]{Stanley} and
references there.  For similar results on the Kronecker coefficients,
appearing in the decomposition of the tensor product of
representations of symmetric groups,
see~\cite{briand2009quasipolynomial, briand2011stability}.

\item[Classical algebraic geometry] The spaces $S^k W^*$ and
$\bigwedge^k W^*$ are ambient spaces of the Veronese variety and the
Grassmannian. These varieties and related objects, e.g.,~their secant
and tangential varieties, have been studied classically (see
\cite{MR1234494} and references therein). The description of the
algebra of the Veronese and Grassmannian is well-known. However, as
the decomposition of $S^d(S^k W)$ is not known, the decomposition of
the degree $d$ part of the ideal is a difficult problem---even for
quartics! Our results provide such a description. Furthermore, due to
problems motivated by determining ranks of tensors, secant varieties
are often studied from a computational point of view.  It is an open
problem to check, whether the ideal of the secant (line) variety of
any Grassmannian is generated by cubics. A~description of all cubics
in the ideal was given in~\cite{manivel2014secants}. It is natural to
ask which quadrics are generated by cubics. To answer this question,
the description of all degree four polynomials is helpful. Thus, our
results provide very practical information. One could argue that we
provide the decomposition of very low degree equations. Note, however,
that on the $k$-th secant variety, no equations of degree less than or
equal to $k$ vanish. On the other hand, the equations of degree $k+1$
sometimes already provide all generators of the ideal (e.g., the
Segre-Veronese varieties for $k=2$ \cite{Raicu}). Thus, knowing their
decompositions is an important first step in determining the structure
of the whole ideal.
The same method can be applied to other ideals defined by objects
related to representation theory. One example is the ideal of
relations among $k\times k$ minors of a generic matrix studied
in~\cite{BCV}.

\item[Errors] As the formulas and computations become more and more
technically involved, the chance of human error rises.  To quote from
Howe~\cite{Howe87}: \emph{Here we will outline what is involved in the
computations and list our answers. The details are available from the
author on request. The author does hope someone will check the
calculations, because he does not have a great deal of faith in his
ability to carry through the details in a fault-free manner. He hopes,
however, that the answers are qualitatively correct as stated.}  We
have not checked all historical formulas that overlap with our
results, but errors have been identified before
(compare~\cite[Appendix]{manivel2014secants} and~\cite{CGR83}).
\end{description}

\subsection*{Convention and notation}
All representations considered are finite dimensional. Let
$\lambda=(\lambda_1,\dots,\lambda_l)$ with
$\lambda_1\geq\dots\geq \lambda_l> 0$ be an integral partition of $n$,
i.e.,~$\sum_{i=1}^l\lambda_i=n$. We set $|\lambda|=n$. Consider a
vector space $W$ of dimension at least~$l$. Let $S^\lambda W$ be the
irreducible representation of $GL(W)$ corresponding to~$\lambda$,
obtained by acting with the Young symmetrizer $c_\lambda$ on
$W^{\otimes n}$~\cite{fulton91:_repres_theor}. We use the convention
that the partition $(1,\dots,1)$ corresponds to the wedge product
representation $\bigwedge^{n}W$ and $(\lambda_1)$ to the symmetric
power $S^n W$. All irreducible representations of $SL(W)$ can be
obtained by considering partitions $\lambda$ with $n$ arbitrary and
$l<\dim W$.  For $GL(W)$ the theory is similar.  There we have to
specify an additional integer~$r$. The vector space and the group
action are the same as those for $SL(W)$, but additionally we multiply
a given vector by the determinant to the power~$r$.

In general, the \emph{Schur functors} $S^\lambda$ are endofunctors of
the category of representations.  Applying them to the standard
representation $W$ yields all irreducible representations.  Applying
$S^\lambda$ to other irreducible representations, in general, yields
reducible representations.  The \emph{plethysm} is to understand the
decomposition of~$S^\lambda(S^\mu W)$.

\subsection*{Acknowledgments}
The authors would like to thank Sven Verdoolaege for his prompt
responses to issues raised on the \isl-mailing list. The second author
would like to thank Laurent Manivel for introducing him to the subject
of plethysm.  After the first posting of this paper on the arXiv
Matthias Christandl, Laurent Manivel, and Mich\`ele Vergne provided
very insightful comments on how to apply Meinrenken-Sjamaar theory to
the plethysm.  We would like to thank them also for their suggestions
on how to improve the paper.  This project started while Micha\l{}ek
was an Oberwolfach Leibniz fellow and invited Kahle for work at
MFO. The project was finished at Freie Universit\"at Berlin during
Micha\l{}ek's DAAD PRIME fellowship.

\section{Characters}
The trace of a $GL(W)$ representation is a symmetric polynomial in the
eigenvalues, known as the \emph{character} of the representation. To
determine the character, one considers the action of diagonal
matrices.

\begin{df}[$h_k(x^a)$, $\psi_\alpha$]
Consider $d$ variables $x_1,\dots,x_d$.  For $a\in\N$, let $h_k(x^a)$
be the complete symmetric polynomial of degree $k$ in the variables
$x_1^a,\dots,x_d^a$.  Let $\alpha$ be a multi-index of length~$j$.  We
define the polynomials
\[
\psi_n=\sum_i x_i^n,\qquad \psi_\alpha=\prod_{i=1}^j \psi_{\alpha_i}, \quad
\text{and} \quad
\psi_{\alpha}\circ h_k = \prod_{i=1}^j h_k(x^{\alpha_{i}}).
\]
\end{df}
\begin{exm}
The character of the representation $S^k(W)$ is the sum of all
monomials of degree $k$ in $\dim W$ variables, that is~$h_k(x)$.  For
$\bigwedge^k W$, we obtain the sum of all square-free monomials of
degree~$k$, known as the elementary symmetric polynomial.
\end{exm}

For any representation $V$, the associated character is denoted
by~$P_V$.  The character of the irreducible representation
$S^\lambda W$ is the \emph{Schur polynomial}~$P_\lambda$.  Schur
polynomials are independent and form a basis of symmetric polynomials.
Since the character of a sum of two representations is the sum of
their characters, in order to decompose any representation, it is
enough to express its character as a sum of Schur polynomials.
Precisely, $V=\sum (S^\lambda V)^{\oplus a_\lambda}$ if and only if
$P_V=\sum a_\lambda P_\lambda$.  Other operations on representations
translate too.  For instance, the plethysm of two symmetric
polynomials $f,g$ is the composition $f\circ g$
(see~\cite[I.8]{macdonald1998symmetric} for a precise algebraic
definition).

\begin{prop}[{\cite[I.8.3, I.8.4, I.8.6]{macdonald1998symmetric}}]
\label{prop:pletyzm}
For any symmetric polynomial $f$, the mapping $g\rightarrow g\circ f$
is an endomorphism of the ring of symmetric polynomials. For any
$n\in\N$, the mapping $g\rightarrow \psi_n\circ g$ is an endomorphism
of the ring of symmetric polynomials. Moreover,
  \[
  \psi_n\circ g=g\circ \psi_n=g(x_1^n,x_2^n,\dots).
  \]
\end{prop}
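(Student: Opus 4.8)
The plan is to derive all three statements from two standard structural facts about symmetric functions: first, that over the rationals the power sums $\psi_1,\psi_2,\dots$ are algebraically independent and generate the ring, so that it is the polynomial ring $\Q[\psi_1,\psi_2,\dots]$ (see~\cite[I.2]{macdonald1998symmetric}, or Newton's identities); and second, that for each $n$ the substitution $x_i\mapsto x_i^n$ defines a ring endomorphism of the ambient polynomial ring. Granting these, everything reduces to bookkeeping with the definition of plethysm from~\cite[I.8]{macdonald1998symmetric}.

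I would begin with the operation $g\mapsto\psi_n\circ g$. Write $\varphi_n$ for the substitution $x_i\mapsto x_i^n$. It sends symmetric polynomials to symmetric polynomials, since permuting the $x_i$ permutes the monomials $x_i^n$ in the same way, and $\varphi_n(\psi_m)=\sum_i x_i^{nm}=\psi_{nm}$. The base case of the definition of plethysm is precisely $\psi_n\circ g=\varphi_n(g)=g(x_1^n,x_2^n,\dots)$, so this already gives the second statement and the identity $\psi_n\circ g=g(x_1^n,x_2^n,\dots)$ of the last sentence. For the first statement, fix a symmetric polynomial $f$ and let $g\mapsto g\circ f$ be the unique $\Q$-algebra endomorphism of the ring of symmetric functions sending each $\psi_n$ to $\psi_n\circ f=\varphi_n(f)$; this is well defined precisely because the $\psi_n$ are a free polynomial generating set, and it is a ring endomorphism by construction. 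To complete the proof, I would obtain $g\circ\psi_n=\psi_n\circ g$ by comparing the two ring endomorphisms $g\mapsto g\circ\psi_n$ and $g\mapsto\psi_n\circ g=\varphi_n(g)$: each carries the generator $\psi_m$ to $\psi_{mn}$, so they coincide. (The identity $\psi_\alpha\circ h_k=\prod_i h_k(x^{\alpha_i})$ used in the preceding definition is then the instance $g=\prod_i\psi_{\alpha_i}$, $f=h_k$.)

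The only step carrying real content is the algebraic independence of the power sums, which is what makes $g\mapsto g\circ f$ unambiguous; it is also why the argument runs over $\Q$, and why, if one prefers polynomials in a fixed finite number of variables to symmetric functions, one must use enough variables (or pass to the inverse limit) before specializing back. The further classical fact that plethysm preserves integrality of the coefficients is not needed for this statement; for the plethysms occurring here --- with $h_k$ and with the elementary symmetric polynomials --- it is in any case immediate from the description of $g\circ f$ as substituting the monomials of $f$ for the variables of $g$.
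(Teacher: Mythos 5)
The paper gives no proof of this proposition at all: it is quoted directly from Macdonald~\cite[I.8.3, I.8.4, I.8.6]{macdonald1998symmetric}, so there is no argument of the authors to compare yours against. Your proposal supplies a correct and essentially standard self-contained proof via the power-sum presentation: over $\Q$ the power sums $\psi_1,\psi_2,\dots$ freely generate the ring of symmetric functions, the substitution $\varphi_n\colon x_i\mapsto x_i^n$ is a ring endomorphism with $\varphi_n(\psi_m)=\psi_{mn}$, this gives the second statement and $\psi_n\circ g=g(x_1^n,x_2^n,\dots)$ at once, and comparing the two endomorphisms $g\mapsto g\circ\psi_n$ and $g\mapsto\varphi_n(g)$ on the free generators $\psi_m$ yields $g\circ\psi_n=\psi_n\circ g$.

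The one point you should make explicit is that, as written, your first assertion is true by construction rather than proved: you \emph{define} $g\mapsto g\circ f$ as the unique $\Q$-algebra endomorphism sending $\psi_m\mapsto\varphi_m(f)$, whereas the proposition (and its use downstream, where $P_{S^\mu}\circ h_k$ must be the character of $S^\mu(S^k W)$ in Proposition~\ref{prop:plethysmChar}) refers to Macdonald's plethysm, defined by substituting the monomials of the inner function for the variables of the outer one. To tie your construction to that operation you need the short verification that, for $f$ with nonnegative integer coefficients (such as $h_k$), substitution of the monomials $y_1,y_2,\dots$ of $f$ is a ring homomorphism in $g$ and sends $\psi_m$ to $\sum_i y_i^m=f(x_1^m,x_2^m,\dots)=\varphi_m(f)$, hence agrees with your endomorphism; you gesture at this in your closing sentence, and it is a one-line check, but it is the step that makes your proof a proof of the cited statement rather than of a restated definition. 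With that remark added, your route is a perfectly good replacement for the citation, and arguably more informative, since it isolates exactly which structural fact (algebraic independence of the $\psi_n$ over $\Q$) the whole proposition rests on.
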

\begin{rem}
\label{Rem:Notation} Proposition~\ref{prop:pletyzm} justifies the
notation
\[
\psi_{\alpha}\circ h_k=\prod_{i=1}^j h_k(x^{\alpha_{i}}).
\]
\end{rem}

From now on, assume that $\dim W$ is large enough so that all
appearing partitions have at most $\dim W$ parts and fix a partition
$\mu$ of an integer~$d$.  Irreducible representations of the
permutation group $S_d$ are indexed by Young diagrams with exactly $d$
boxes. The character corresponding to the Young diagram $\rho\vdash d$
is denoted~$\chi_\rho$.
\begin{df}[{$z_\rho$, \cite[p.17]{macdonald1998symmetric}}] Let
$\rho=(\rho_1\geq \dots\geq\rho_k)$ be a partition of~$d$ and $m_i$
the number of parts equal to~$i$. We define
\[
z_\rho = \prod_{i\geq 1}i^{m_i}m_i! = \frac{d!}{D_\rho},
\]
where $D_\rho$ is the number of permutations of cycle type $\rho$.
\end{df}

\begin{rem}
\label{uwa:formula}
With \cite[I.7.(7.2) and I.7.(7.5)]{macdonald1998symmetric} the
character $P_\mu$ can be expressed in terms of $\psi_n$ as
\[
P_\mu=\sum_{\rho\vdash d}z_\rho^{-1}\chi_\mu(\rho)\psi_\rho,
\]
where $\chi_\mu(\rho)$ is the value of the character $\chi_\mu$ on
(any) permutation of type $\rho$.
\end{rem}
\begin{exm}
As the Young diagram $(d)$ corresponds to the trivial representation
of $S_d$, we obtain the formula for the complete symmetric polynomial:
\[
h_d=P_{(d)}=\sum_{\rho\vdash d}\frac{D_\rho}{d!}\psi_\rho,
\]
where $D_\rho$ is the number of permutations of combinatorial type
$\rho$ in the group~$S_d$.  The Young diagram $(1,\dots,1)$
corresponds to the sign representation of $S_d$. Hence, we obtain the
formula for the character of the wedge power:
\[
P_{(1,\dots,1)}=\sum_{\rho\vdash
d}\sgn(\rho)\frac{D_\rho}{d!}\psi_\rho.
\]
\end{exm}
\begin{prop}\label{prop:plethysmChar}
The character of the representation $S^\mu(S^k W)$ equals
\[
P_{S^\mu(S^k W)}=\sum_\alpha
\chi_\mu(\alpha)\frac{D_\alpha}{d!}\psi_\alpha\circ h_k,
\]
where the sum is taken over all partitions $\alpha$ of $d:=|\mu|$ and
$D_\alpha$ is the number of permutations of cycle type $\alpha$ in the
group~$S_d$.
\end{prop}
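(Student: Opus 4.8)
The plan is to reduce the statement to the expansion of $P_\mu$ from Remark~\ref{uwa:formula} together with the functoriality of plethysm. First I would record the dictionary between representations and symmetric polynomials: the character of $S^k W$ is $P_{S^k W}=h_k$, and, since $P_\mu$ is by definition the character of the Schur functor $S^\mu$ applied to the defining representation, applying $S^\mu$ to an arbitrary module corresponds on characters to plethystic substitution into $P_\mu$ (this is the defining property of plethysm of symmetric functions, see \cite[I.8]{macdonald1998symmetric} and the discussion preceding Proposition~\ref{prop:pletyzm}). Hence $P_{S^\mu(S^k W)}=P_\mu\circ h_k$, and the remaining argument lives entirely in the ring of symmetric polynomials.

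Next I would substitute the expansion $P_\mu=\sum_{\alpha\vdash d}z_\alpha^{-1}\chi_\mu(\alpha)\psi_\alpha$ from Remark~\ref{uwa:formula}, rewrite $z_\alpha^{-1}=D_\alpha/d!$ using the definition of $z_\alpha$, and then apply the map $g\mapsto g\circ h_k$. By Proposition~\ref{prop:pletyzm} this map is an endomorphism of the ring of symmetric polynomials, hence $\mathbb{Q}$-linear and multiplicative, so
\[
P_{S^\mu(S^k W)}=P_\mu\circ h_k=\sum_{\alpha\vdash d}\chi_\mu(\alpha)\frac{D_\alpha}{d!}(\psi_\alpha\circ h_k),
\]
and, since $\psi_\alpha=\prod_i\psi_{\alpha_i}$, multiplicativity gives $\psi_\alpha\circ h_k=\prod_i(\psi_{\alpha_i}\circ h_k)=\prod_i h_k(x^{\alpha_i})$ using $\psi_n\circ h_k=h_k\circ\psi_n=h_k(x_1^n,x_2^n,\dots)$; this is exactly the polynomial $\psi_\alpha\circ h_k$ from the defining Definition, as already flagged in Remark~\ref{Rem:Notation}. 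Comparing with the claimed formula finishes the proof.

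I do not expect a genuine obstacle. The only point requiring care is the direction of the plethysm: it is substitution of the inner character $h_k$ into the outer symmetric function $P_\mu$, not the reverse, after which the ring-homomorphism property of $-\circ h_k$ does all the work of distributing over the sum over $\alpha$ and over the product $\psi_\alpha=\prod_i\psi_{\alpha_i}$. The arithmetic identity $z_\alpha^{-1}=D_\alpha/d!$ is immediate from the definition, and no analytic subtleties arise because everything is a finite polynomial identity, valid under the standing assumption that $\dim W$ is large enough.
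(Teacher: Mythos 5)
Your proposal is correct and follows essentially the same route as the paper: identify $P_{S^\mu(S^k W)}=P_{S^\mu}\circ h_k$ and substitute the power-sum expansion of $P_\mu$ from Remark~\ref{uwa:formula} with $z_\alpha^{-1}=D_\alpha/d!$. The extra details you supply (the ring-endomorphism property of $-\circ h_k$ from Proposition~\ref{prop:pletyzm} distributing over the sum and the product $\psi_\alpha=\prod_i\psi_{\alpha_i}$, matching the notation of Remark~\ref{Rem:Notation}) are exactly what the paper leaves implicit.
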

\begin{proof}
We have
\[
P_{S^\mu(S^k W)}=P_{S^\mu}\circ h_k.
\]
By Remark~\ref{uwa:formula}, this equals
\[
\sum_{\rho\vdash d}z_\rho^{-1}\chi_\mu(\rho)\psi_\rho\circ h_k.  \qedhere
\]
\end{proof}

\begin{rem}\label{r:yangKostka}
A similar formula for arbitrary composition of Schur functors is
presented in \cite[Theorem 2.2]{yang1998algorithm}. We do not apply it
directly, as it relies on 'nested inverse Kostka numbers'. As
explained in \cite{yang1998algorithm, yang2002first}, the computation
of those, although possible in many cases, is a nontrivial task. For
this reason, we introduce one more change of basis of symmetric
polynomials, relating our results to transportation polytopes. From
the algorithmic point of view, although the final result counts the
same multiplicities, enumeration of points in dilated polytopes is
easier than enumeration of skew Young diagrams with specific
properties.
\end{rem}

For fixed $d$, all partitions can be listed and the decomposition of
$P_{S^\mu(S^k W)}$ into Schur polynomials reduces to the decomposition
of each polynomial $\psi_\alpha\circ h_k$. Indeed, the values of
$\chi_\mu(\rho)$ can be made explicit by the celebrated Frobenius
Formula \cite[4.10]{fulton91:_repres_theor}.  As similar results will
be used later, we review the formula in detail.
\begin{df}[{$[P]_\alpha$}, $\Delta(x)$]
For any polynomial $P$ and partition
$\alpha=(\alpha_1,\dots,\alpha_k)$, define $[P]_\alpha$ as the
coefficient of the monomial $x_1^{\alpha_1}\cdot\dots\cdot
x_k^{\alpha_k}$ in~$P$.
\end{df}
\begin{df}
For a fixed number of variables $x_1,\dots,x_k$, the
\emph{discriminant} is
\[
\Delta(x) = \prod_{i<j}(x_i-x_j).
\]
\end{df}

The value of the character $\chi_\mu$ on any permutation of cycle type
$\rho$ equals:
\begin{equation}\label{eq:FrobeniusFormula}
\chi_\mu(\rho)=[\Delta(x)\psi_\rho]_{(\mu_1+k-1,\mu_2+k-2,\dots,\mu_k)}.
\qquad(\text{Frobenius formula})\qquad
\end{equation}
\begin{exm}
Consider a permutation $\pi\in S_4$ of cycle type $(3,1)$, e.g.,~the
permutation that fixes $4$ and permutes
$1\rightarrow 2\rightarrow 3\rightarrow 1$. Consider a representation
corresponding to the partition $2+2=4$. We obtain:
\begin{equation*}
\chi_{(2,2)}(\pi)=[(x_1-x_2)(x_1^3+x_2^3)(x_1+x_2)]_{(3,2)}=[x_1^5-x_1^3x_2^2+x_1^2x_2^3-x_2^5]_{(3,2)}=-1.
\end{equation*}
\end{exm}

\section{Reductions}\label{sec:reductions}

To make Proposition~\ref{prop:plethysmChar} effective, we employ the
following simplifications.
\begin{enumerate}
\item Reduction of the number of variables.
\item Application of the Littlewood--Richardson rule to the most
complicated term.
\item Change of basis of symmetric functions.
\item Reduction to combinatorics of polytopes.
\end{enumerate}
\subsection{Reduction of the number of variables}
Our aim is to compute the multiplicity of the isotypic component
corresponding to $\lambda$ inside $S^\mu(S^k W)$. By the
Littlewood--Richardson rule, $\lambda$ can have at most $|\mu|$ rows,
so we can assume $\dim W=|\mu|$.
\begin{prop} [\cite{Carre}, \cite{MR1651092}]\label{prop:dualities}
\[
S^{\mu}(S^{2l} W)=S^{\mu}(\bigwedge^{2l} W)^\vee,\qquad
S^{\mu}(S^{2l+1} W)=S^{\mu^\vee}(\bigwedge^{2l+1}W)^\vee,
\]
where $(.)^\vee$ stands for the representation arising from $(.)$ by
replacing each irreducible component corresponding to a Young diagram
$\nu$ with the component corresponding to the transpose of $\nu$,
denoted~$\nu^\vee$.
\end{prop}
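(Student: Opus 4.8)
The plan is to pass to characters and recognise $(\cdot)^\vee$ as the classical involution $\omega$ of the ring of symmetric functions. Recall that $P_{S^k W}=h_k$, that $P_{\bigwedge^k W}=e_k:=P_{(1,\dots,1)}$, that the $P_\nu$ form a basis, and (e.g.\ \cite[I.2--I.3]{macdonald1998symmetric}) that $\omega$ is the ring automorphism with $\omega(\psi_n)=(-1)^{n-1}\psi_n$; it also satisfies $\omega(P_\nu)=P_{\nu^\vee}$ and $\omega(h_k)=e_k$. Since $V^\vee$ replaces each component $S^\nu W$ of $V$ by $S^{\nu^\vee}W$, on characters $P_{V^\vee}=\omega(P_V)$. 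Using the general relation $P_{S^\mu(V)}=P_\mu\circ P_V$ (in particular $P_{S^\mu(S^k W)}=P_\mu\circ h_k$, as in the proof of Proposition~\ref{prop:plethysmChar}, and $P_{S^\mu(\bigwedge^k W)}=P_\mu\circ e_k$), the two claimed isomorphisms become the symmetric-function identities
\[
\omega(P_\mu\circ h_{2l})=P_\mu\circ e_{2l}\qquad\text{and}\qquad\omega(P_\mu\circ h_{2l+1})=P_{\mu^\vee}\circ e_{2l+1}.
\]
One should read these with $\dim W$ large enough that both sides are genuine representations — transposing $\lambda$ can force $S^{\lambda^\vee}W=0$ in few variables — but the multiplicities involved are stable in $\dim W$.

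To prove the displayed identities I would expand $P_\mu=\sum_{\rho\vdash d}z_\rho^{-1}\chi_\mu(\rho)\,\psi_\rho$ as in Remark~\ref{uwa:formula}, with $d=|\mu|$. Because $g\mapsto g\circ h_k$ is a ring homomorphism (Proposition~\ref{prop:pletyzm}) and $\psi_n\circ h_k=h_k\circ\psi_n$, one gets $P_\mu\circ h_k=\sum_{\rho}z_\rho^{-1}\chi_\mu(\rho)\prod_i(h_k\circ\psi_{\rho_i})$. Applying the ring homomorphism $\omega$, everything reduces to the one-variable computation
\[
\omega(h_k\circ\psi_m)=(-1)^{k(m-1)}\,(e_k\circ\psi_m),
\]
which follows by writing $h_k=\sum_{\sigma\vdash k}z_\sigma^{-1}\psi_\sigma$, so $h_k\circ\psi_m=\sum_\sigma z_\sigma^{-1}\psi_{m\sigma}$, then using $\omega(\psi_{m\sigma})=(-1)^{mk-\ell(\sigma)}\psi_{m\sigma}$ and comparing with $e_k\circ\psi_m=\sum_\sigma(-1)^{k-\ell(\sigma)}z_\sigma^{-1}\psi_{m\sigma}$.

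Assembling the pieces, $\omega(P_\mu\circ h_k)=\sum_\rho z_\rho^{-1}\chi_\mu(\rho)\,(-1)^{k(d-\ell(\rho))}\prod_i(e_k\circ\psi_{\rho_i})$, using $\prod_i(-1)^{k(\rho_i-1)}=(-1)^{k(d-\ell(\rho))}$. For even $k$ this sign is $1$ and the sum equals $P_\mu\circ e_k$, giving the first identity. For odd $k$ the sign is $(-1)^{d-\ell(\rho)}=\sgn(\rho)$; since transposing a Young diagram twists the $S_d$-character by the sign character, $\sgn(\rho)\chi_\mu(\rho)=\chi_{\mu^\vee}(\rho)$, so the sum becomes $\sum_\rho z_\rho^{-1}\chi_{\mu^\vee}(\rho)\prod_i(e_k\circ\psi_{\rho_i})=P_{\mu^\vee}\circ e_k$, giving the second. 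The main difficulty I anticipate is purely the bookkeeping of signs: one must not treat plethysm as multiplicative in its inner slot (only $\psi_n\circ(\cdot)$ is a ring homomorphism there), and one must keep in mind that $\omega$ does not commute with the substitution $x_i\mapsto x_i^m$ — this is precisely the source of the parity-dependent sign $(-1)^{k(m-1)}$.
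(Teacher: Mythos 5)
Your proof is correct. Note, however, that the paper does not prove Proposition~\ref{prop:dualities} at all: it is quoted from the cited references (Carr\'e and Carr\'e--Leclerc), and only the subsequent Reduction Lemma is proved in the text. What you have written is essentially a self-contained reconstruction of the classical fact behind those references, namely that for homogeneous $g$ the involution $\omega$ satisfies $\omega(f\circ g)=f\circ\omega(g)$ when $\deg g$ is even and $\omega(f\circ g)=\omega(f)\circ\omega(g)$ when $\deg g$ is odd (Macdonald, Ch.~I.8), specialized to $g=h_k$. Your bookkeeping checks out: expanding $h_k=\sum_\sigma z_\sigma^{-1}\psi_\sigma$ gives $h_k\circ\psi_m=\sum_\sigma z_\sigma^{-1}\psi_{m\sigma}$, and comparing $\omega(\psi_{m\sigma})=(-1)^{mk-\ell(\sigma)}\psi_{m\sigma}$ with $e_k\circ\psi_m=\sum_\sigma(-1)^{k-\ell(\sigma)}z_\sigma^{-1}\psi_{m\sigma}$ indeed yields $\omega(h_k\circ\psi_m)=(-1)^{k(m-1)}e_k\circ\psi_m$; the resulting global sign $(-1)^{k(d-\ell(\rho))}$ is $1$ for even $k$ and $\sgn(\rho)$ for odd $k$, which is correctly absorbed via $\sgn(\rho)\chi_\mu(\rho)=\chi_{\mu^\vee}(\rho)$. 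Your identities $\omega(P_\mu\circ h_{2l})=P_\mu\circ e_{2l}$ and $\omega(P_\mu\circ h_{2l+1})=P_{\mu^\vee}\circ e_{2l+1}$ are equivalent to the displayed isomorphisms because $\omega$ is an involution and $P_{V^\vee}=\omega(P_V)$, and your caveat about working stably (large $\dim W$, since $\omega$ is only an automorphism of the ring of symmetric functions in infinitely many variables) is exactly the right one and is consistent with the paper's standing assumption that $\dim W$ is large enough for all partitions that occur. The two cautionary remarks at the end (plethysm is a ring homomorphism only in the outer slot; $\omega$ does not commute with $x_i\mapsto x_i^m$) identify precisely the points where a sloppier argument would go wrong, and you avoid both.
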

Proposition~\ref{prop:dualities} says that the multiplicity of an
isotypic component corresponding to $\lambda$ inside $S^{\mu}(S^{l}W)$
is the multiplicity of $\lambda^\vee$ inside either
$S^{\mu^\vee}(\bigwedge^{l}W)$ or $S^{\mu}(\bigwedge^{l} W)$.  For the
wedge power, the following well-known reductions hold which we prove
for the sake of completeness.
\begin{lema}[Reduction Lemma {\cite[5.8, 5.9]{Carre}},
{\cite[Lemma 6.3]{manivel2014secants}}]
\label{lem:redukcja} Let $\mu$ be any Young diagram of weight~$d$, and
$\lambda$ a Young diagram with $d$ columns and weight~$dk$. Let
$\lambda'$ equal $\lambda$ with the first row removed. The
multiplicity of the component corresponding to $\lambda$ in
$S^{\mu}(\bigwedge^k W)$ equals the multiplicity of the component
corresponding to $\lambda'$ in $S^{\mu}(\bigwedge^{k-1} W)$.
\end{lema}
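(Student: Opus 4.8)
The plan is to argue at the level of characters and to isolate the multiplicity of a single isotypic component as the coefficient of one power of one variable. Write $d=|\mu|=\dim W$ (as arranged above), let $e_k=e_k(x_1,\dots,x_d)$ be the elementary symmetric polynomial of degree $k$, which is the character of $\bigwedge^k W$, and put $x^{n}=(x_1^{n},\dots,x_d^{n})$. First I would record the $\bigwedge$-analogue of Proposition~\ref{prop:plethysmChar}, proved in exactly the same way (use Remark~\ref{uwa:formula} with $e_k$ in place of $h_k$):
\[
P_{S^\mu(\bigwedge^k W)}=\sum_{\rho\vdash d}z_\rho^{-1}\chi_\mu(\rho)\,(\psi_\rho\circ e_k),\qquad\text{where}\quad \psi_\rho\circ e_k=\prod_i e_k(x^{\rho_i}).
\]
In what follows $(\,\cdot\,)^{(d-1)}$ denotes the analogous object formed in the $d-1$ variables $x_2,\dots,x_d$.

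The computation the whole argument rests on is that every variable occurs in $e_k(x^{n})$ only to the power $0$ or $n$, which yields
\[
\psi_n\circ e_k \;=\; x_1^{\,n}\,(\psi_n\circ e_{k-1})^{(d-1)}\;+\;(\psi_n\circ e_{k})^{(d-1)}.
\]
Multiplying this identity over the parts of a partition $\rho\vdash d$ and using $\sum_i\rho_i=d$, one sees that $x_1^{d}$ is the top power of $x_1$ occurring in $\psi_\rho\circ e_k$ and that its coefficient is $(\psi_\rho\circ e_{k-1})^{(d-1)}$. Summing over $\rho$, the coefficient of $x_1^{d}$ in $P_{S^\mu(\bigwedge^k W)}$ is precisely $P_{S^\mu(\bigwedge^{k-1}W')}$ evaluated in the variables $x_2,\dots,x_d$, where $\dim W'=d-1$.

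Next I would compute the same coefficient from the Schur expansion $P_{S^\mu(\bigwedge^k W)}=\sum_\nu m_\nu\, P_\nu$: the top power of $x_1$ in $P_\nu(x_1,\dots,x_d)$ is $\nu_1$ — a semistandard tableau can carry a $1$ only in its first row — and the coefficient of $x_1^{\nu_1}$ there is $P_{\nu'}(x_2,\dots,x_d)$, where $\nu'$ is $\nu$ with its first row deleted (coming from the tableaux whose first row consists of $1$'s). Since every $\nu$ occurring in $S^\mu(\bigwedge^k W)$ has at most $d$ columns (that plethysm is a subquotient of $(\bigwedge^k W)^{\otimes d}$, whose weights have entries in $\{0,1\}$), only the $\nu$ with $\nu_1=d$ contribute to the coefficient of $x_1^{d}$, so that coefficient equals $\sum_{\nu:\,\nu_1=d}m_\nu\,P_{\nu'}(x_2,\dots,x_d)$. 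Comparing the two expressions for the coefficient of $x_1^{d}$ and using linear independence of the Schur polynomials $P_{\nu'}(x_2,\dots,x_d)$, I would read off $m_\lambda=$ (multiplicity of $S^{\lambda'}W'$ in $S^\mu(\bigwedge^{k-1}W')$, $\dim W'=d-1$) for every $\lambda$ with $\lambda_1=d$, after noting that $\nu\mapsto\nu'$ matches up the two index sets (both consist of partitions of $d(k-1)$ with at most $d-1$ rows and at most $d$ columns). A final stability remark completes the proof: as $\lambda'$ has at most $d-1$ rows, its multiplicity in $S^\mu(\bigwedge^{k-1}W')$ is unchanged when $W'$ is enlarged to $W$, because multiplicities of isotypic components in a plethysm depend only on the component, not on $\dim W$, as soon as $\dim W$ is at least the number of rows of the component.

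I do not expect a serious obstacle. The argument is short once one notices the displayed recursion for $e_k(x^{n})$ and how it interacts with $|\mu|=d$; the only points requiring care are the combinatorial bookkeeping that \emph{deleting the first row} gives a bijection between the two relevant sets of partitions, and the (entirely routine) dimension-stability statement at the end.
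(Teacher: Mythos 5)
Your proof is correct, but it takes a genuinely different route from the paper's. The paper argues at the level of vectors: it embeds $S^{\mu}(\bigwedge^k W)$ into $(\bigwedge^k W)^{\otimes d}$, notes that a highest weight vector of weight $\lambda$ (with $\lambda_1=d$) must contain $e_1$ exactly once in each of the $d$ tensor factors, and removes $e_1$ while shifting the remaining basis indices down by one; this produces an explicit isomorphism of the two highest weight spaces, so it yields a canonical identification of multiplicity spaces rather than just an equality of numbers, in two lines. You instead argue at the level of characters: using the $e_k$-analogue of Proposition~\ref{prop:plethysmChar} together with the recursion $e_k(x^n)=x_1^{n}\,e_{k-1}(x_2^n,\dots)+e_k(x_2^n,\dots)$, you extract the coefficient of $x_1^{d}$ in $P_{S^\mu(\bigwedge^k W)}$ in two ways---from the power-sum expansion, where it is $P_{S^\mu(\bigwedge^{k-1}W')}$ in one fewer variable, and from the Schur expansion, where only the $\nu$ with $\nu_1=d$ survive and contribute $P_{\nu'}$---and then compare via linear independence of Schur polynomials; the degree bound $\nu_1\le d$, the bijection $\nu\mapsto\nu'$ between the relevant index sets, and the concluding stability remark are all in order. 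What your approach buys is that it stays entirely within the symmetric-function machinery the paper already sets up (and generalizes readily to other inner functors with $0/1$ weights); the price is the extra stability step needed to pass from $\dim W-1$ back to $\dim W$ variables, which the paper's vector-level argument sidesteps. One small correction: do not fix $\dim W=|\mu|$ at the outset---a $\lambda$ with $d$ columns and weight $dk$ has at least $k$ rows, so $\dim W=d$ would cover only a restricted range of $k$; but nothing in your computation uses that identification, and the argument goes through verbatim in any number of variables $N=\dim W\ge \ell(\lambda)$, which is the implicit standing assumption of the lemma (and of the paper's own proof) in any case.
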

\begin{proof}
Consider the inclusion $S^{\mu}(\bigwedge^k W)\subset (\bigwedge^k
W)^{\otimes d}$ with a basis given by tensor products of wedge
products of basis elements of~$W$. Each vector in the highest weight
space corresponding to $\lambda$ must contain exactly one $e_1$ in
each tensor. We get an isomorphism of highest weight spaces by
removing $e_1$ and decreasing the indices of other basis vectors by
one.
\end{proof}
The above facts show that whenever $\lambda^\vee$ has $d$ nonzero
columns (or equivalently $\lambda$ has $d$ nonzero rows), we can
express the multiplicity in the plethysm by a multiplicity in a
simpler plethysm.  It follows that it is enough to determine the
multiplicities of isotypic components corresponding to $\lambda$ with
at most $d-1$ rows.  This is equivalent to the assumption that
$\dim W=d-1$ or that the symmetric polynomials are in variables
$x_1,\dots,x_{d-1}$. From now on, we make this assumption, recovering
the general case at the end (Remark~\ref{r:extend}).

\subsection{Application of Littlewood--Richardson rule}
\label{sec:Littlewood-Richardson}
Suppose
\[
\psi_\alpha\circ h_k=\sum_\lambda a_{\alpha,\lambda} S^\lambda,
\]
where $S^\lambda$ is the Schur polynomial corresponding to $\lambda$
and the sum is over all partitions $\lambda \vdash dk$, with at most
$d-1$ parts.  In the following sections, we associate polytopes to the
polynomials $\psi_\alpha\circ h_k$.  Although our computer algebraic
methods work in general, they are least efficient for the partition
$\alpha=(1,\dots,1)$.  In this section, we show how to express
$\psi_{(1,\dots,1)}\circ h_k$ in terms of Schur polynomials without
further computation.  While in the end these reductions were not
necessary in our computations, we present them as an introduction to
the methods in the remaining sections and to better understand the
leading term in the plethysm formula.

Fix $\alpha_0=(1,\dots,1)\vdash d$. By Remark~\ref{Rem:Notation},
$\psi_{\alpha_0}\circ h_k=(h_k(x))^d,$ the $d$-th power of the
complete symmetric polynomial of degree $k$. As multiplication of
polynomials corresponds to the tensor product of representations, this
is the character of the representation $(S^k W)^{\otimes d}$.  The
decomposition of this representation is known due Pieri's rule (or
more generally the Littlewood--Richardson rule). In order to make the
formulas explicit, consider the following polytope.
\begin{df}[The polytope $P_{k,d}$]\label{def:pieriPoly}
Let $(x_1^1,x_1^2,x_2^2,\dots,x_1^{d-1},\dots,x_{d-1}^{d-1})$ denote
coordinates of the vector space
$\RR^1\times\RR^2\times\dots\times\RR^{d-1}$.  Denote $x^0_1=k$,
$x^j_{j+1}=k-\sum_{i=1}^j x_i^j$ and $x^j_i=0$ for $i>j+1$.  Let
$P_{k,d}$ be the polytope defined by the following constraints:
\begin{enumerate}
\item $x_i^j\geq 0$, for all $i,j$,
\item $\sum_{l\leq j}x_i^l\leq \sum_{l\leq j-1}x_{i-1}^l$, for all $j$
and $1<i\leq j+1$.
\end{enumerate}
\end{df}
In Definition~\ref{def:pieriPoly}, $x_i^j$ corresponds to the number
of boxes added according to Pieri's rule in the $j$-th step in the
$i$-th row.  For a polytope $P$, let $\# P$ denote the number of
integral points in~$P$.  By Pieri's rule we obtain the following
\begin{prop}\label{prop:expansion_coeff}
The coefficient $a_{\alpha_0,\lambda}$ in the expansion
\[
\psi_{\alpha_0}\circ h_k=\sum_\lambda a_{\alpha_0,\lambda} S^\lambda,
\]
equals the number of integral points in $P_{k,d}$ intersected with the
hyperplanes $\sum_j x_i^j=\lambda_i$.  In particular, it can be
computed as the number of points in the fiber of a projection of
$P_{k,d}$.  We will denote the intersection by $P_{k,d}^\lambda$. \qed
\end{prop}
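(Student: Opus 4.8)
The plan is to expand $\psi_{\alpha_0}\circ h_k$ by iterating Pieri's rule and then to match the resulting combinatorial objects with the integral points of $P_{k,d}^\lambda$, one Pieri step at a time. First, recall from Remark~\ref{Rem:Notation} that $\psi_{\alpha_0}\circ h_k=(h_k(x))^d$, a product of $d$ copies of the complete symmetric polynomial of degree $k$ in the $d-1$ variables to which we have reduced. Iterating Pieri's rule $d$ times starting from $S^{\emptyset}=1$ shows that $a_{\alpha_0,\lambda}$ equals the number of chains of partitions
\[
\emptyset=\nu^{(0)}\subseteq\nu^{(1)}\subseteq\dots\subseteq\nu^{(d)}=\lambda
\]
in which every skew shape $\nu^{(t)}/\nu^{(t-1)}$ is a horizontal strip with exactly $k$ boxes (equivalently, the number of semistandard Young tableaux of shape $\lambda$ and content $(k,\dots,k)$). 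Since we work in $d-1$ variables, $S^\lambda$ vanishes whenever $\lambda$ has $d$ or more rows, so it suffices to treat $\lambda$ with at most $d-1$ rows; then every $\nu^{(t)}$ in such a chain has at most $d-1$ rows, and no chains are lost by restricting.

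Next I would set up the bijection between these chains and the integral points of $P_{k,d}^\lambda$. Given a chain, put $x_i^j=\nu^{(j+1)}_i-\nu^{(j)}_i$, the number of boxes added in row $i$ when passing from $\nu^{(j)}$ to $\nu^{(j+1)}$, for $0\le j\le d-1$. Because $\nu^{(t)}$ has at most $t$ rows, $x_i^j=0$ for $i>j+1$; and the first step $\nu^{(1)}/\emptyset$ being a horizontal $k$-strip forces $\nu^{(1)}=(k)$, i.e. $x_1^0=k$ and $x_i^0=0$ for $i\ge 2$. These are exactly the conventions of Definition~\ref{def:pieriPoly}. It remains to translate, for each $j$, the two conditions "$\nu^{(j)}\subseteq\nu^{(j+1)}$" and "$\nu^{(j+1)}/\nu^{(j)}$ is a horizontal strip with $k$ boxes". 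Writing $\nu^{(j)}_i=\sum_{l\le j-1}x_i^l$ and $\nu^{(j+1)}_i=\sum_{l\le j}x_i^l$, containment becomes $x_i^j\ge 0$, and the strip having $k$ boxes becomes $\sum_i x_i^j=k$, which is encoded by the definition $x^j_{j+1}=k-\sum_{i=1}^j x_i^j$ together with the nonnegativity of that coordinate; these are the constraints~(1). The horizontal-strip condition is the interlacing $\nu^{(j+1)}_i\le\nu^{(j)}_{i-1}$ for $i\ge 2$, which in partial sums reads $\sum_{l\le j}x_i^l\le\sum_{l\le j-1}x_{i-1}^l$ — precisely the constraints~(2). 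Finally $\sum_j x_i^j=\nu^{(d)}_i=\lambda_i$, so lattice points satisfying the additional equalities $\sum_j x_i^j=\lambda_i$ are exactly the chains ending at $\lambda$; conversely every integral point of $P_{k,d}^\lambda$ gives a valid chain by the same formulas. This yields $a_{\alpha_0,\lambda}=\#P_{k,d}^\lambda$.

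For the last sentence of the statement, let $\pi\colon\RR^{\binom{d}{2}}\to\RR^{d-1}$ be the linear map $x\mapsto\bigl(\sum_j x_1^j,\dots,\sum_j x_{d-1}^j\bigr)$. Then $P_{k,d}^\lambda=P_{k,d}\cap\pi^{-1}(\lambda)$ is the fiber of $\pi$ restricted to $P_{k,d}$ over the point $\lambda$, so $a_{\alpha_0,\lambda}$ is a fiber-counting function of a projection, as claimed. Note that $\sum_{i\le d-1}\sum_j x_i^j=dk$ holds identically on $P_{k,d}$ (each step adds $k$ boxes and $|\lambda|=dk$), so cutting with the $d-1$ hyperplanes $\sum_j x_i^j=\lambda_i$ automatically forces $x_d^{d-1}=0$, i.e. the final partition to have at most $d-1$ rows, consistent with the reduction above. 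The only real care needed is the bookkeeping: checking that constraints~(1) and~(2) together are equivalent to the interlacing characterization of horizontal strips, and that the degenerate first step ($\nu^{(1)}=(k)$) and last step ($x_d^{d-1}=0$) are correctly captured by the conventions of Definition~\ref{def:pieriPoly}. Beyond this, the argument is a direct unwinding and presents no genuine obstacle.
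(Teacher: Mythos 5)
Your proof is correct and is exactly the argument the paper intends: the paper's justification is simply "By Pieri's rule," with $x_i^j$ interpreted as the number of boxes added in row $i$ at step $j$, and your chain-of-partitions bijection is a careful unwinding of that same identification (including the correct translation of the horizontal-strip interlacing into constraints~(2) and the determined coordinates $x^0_1=k$, $x^j_{j+1}$). No discrepancy with the paper's approach.
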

\begin{rem}
There are other methods to compute the Littlewood--Richardson
coefficients, e.g.,~due to Berenstein and
Zelevinsky~\cite{berenstein1992triple}, that could provide other
polytopal descriptions. Contrary to plethysm, the question which
representations $S^\nu$ appear (with positive multiplicities) in
$S^\lambda\otimes S^\mu$ is well-understood \cite{klyachko1998stable,
knutson1999honeycomb, knutson2004honeycomb}.
\end{rem}
\subsection{Change of basis}
\label{sec:ChangeBasis}
Suppose
\[
\psi_\alpha\circ h_k=\sum_\lambda a_{\alpha,\lambda} S^\lambda,
\]
where $S^\lambda$ is the Schur polynomial corresponding to $\lambda$
and the sum is taken over all partitions $\lambda \vdash dk$, with at
most $d-1$ parts.  By the results of
\cite[Appendix~A]{fulton91:_repres_theor} and
\cite{macdonald1998symmetric}, the coefficient $a_{\alpha,\lambda}$ is
equal to the coefficient of the monomial $x_1^{\lambda_1+d-2}\cdots
x_{d-1}^{\lambda_{d-1}}$ in the polynomial $(\psi_\alpha\circ
h_k)\prod_{i<j} (x_i-x_j)$, that is:
\[
a_{\alpha,\lambda}=[\Delta(x)(\psi_\alpha\circ h_k)]_{(\lambda_1+d-2,\lambda_2+d-3,\dots,\lambda_{d-1})}
\]
\subsection{Integral points in polytopes}\label{s:latticepoints}
When $d$ is fixed, the discriminant $\prod_{i<j} (x_i-x_j)$ is
explicit.  Our aim is to compute the coefficients of the monomials
appearing in $\Delta(x)(\psi_\alpha\circ h_k)$.
\begin{df}[$(\alpha,\lambda)$-matrix]\label{def:almatrix}
Fix partitions $\alpha,\lambda$ and suppose that $\alpha$ has $a$
parts.  An $a\times (d-1)$ matrix $M$ with nonnegative integral
entries is an \emph{$(\alpha,\lambda)$-matrix} if
\begin{enumerate}
\item each row sums up to $k$, i.e.,~$\sum_{j=1}^{d-1} M_{i,j}=k$ for
each $1\leq i\leq a$, and
\item the $\alpha$-weighted entries of the $j$-th column sum up to
$\lambda_j$, i.e.,~$\sum_{i=1}^{a}\alpha_iM_{i,j}=\lambda_j$ for each
$1\leq j\leq d-1$.
\end{enumerate}
\end{df}
\begin{exm}
Let $d=3$ and $\alpha=(3)$ and
$\lambda = (\lambda_1,\lambda_2,\lambda_3)$.  According to
Definition~\ref{def:almatrix}, an $(\alpha,\lambda)$-matrix is a
nonnegative integral $(1\times 2)$ matrix $M = (M_{11}, M_{12})$
satisfying $M_{11} + M_{12} = k$ and $3M = (\lambda_1,\lambda_2)$.
There is no such matrix unless $\lambda_1 \equiv 0 \mod 3$, and if
this is the case, for each $k$, there is exactly one such matrix if
and only if $\lambda_2 = 3k - \lambda_1$.
\end{exm}
It is a straightforward observation that the coefficient of
$x^\lambda$ in $\psi_\alpha\circ h_k$ equals the number of different
$(\alpha,\lambda)$-matrices, as each matrix encodes the expansion of
the product $\prod_{i=1}^a h_k(x^{\alpha_i})$.  We want to obtain an
explicit formula for the number of $(\alpha,\lambda)$-matrices for
fixed $\alpha$ as a piecewise quasi-polynomial in
$k,\lambda_1,\dots,\lambda_{d-2}$ ($\lambda_{d-1}$ is determined as
$\sum_{i=1}^{d-1}\lambda_i=kd$).  Denote this quasi-polynomial by
$Q_\alpha$ such that
\[
\psi_\alpha\circ h_k=\sum_\lambda
Q_\alpha(k,\lambda_1,\dots,\lambda_{d-2})x^\lambda.
\]
Hence, by the Vandermonde formula,
\begin{align*}
\psi_\alpha\circ h_k\prod_{i<j} (x_i-x_j) & =\psi_\alpha\circ h_k
(-1)^{{d-1}\choose{2}}\prod_{i<j} (x_j-x_i) \\
& =(-1)^{{d-1}\choose{2}}(\sum_\lambda
Q_\alpha(k,\lambda_1,\dots,\lambda_{d-2})x^\lambda)(\sum_{\pi\in
S_{d-1}}\sgn(\pi) \prod_{i=1}^{d-1}x_i^{\pi(i)-1}).
\end{align*}
Consequently the coefficient of $x_1^{\lambda_1+d-2}\cdots
x_{d-1}^{\lambda_{d-1}}$ in $(\psi_\alpha\circ h_k)\prod_{i<j}
(x_i-x_j)$ equals:
\[
(-1)^{{d-1}\choose{2}}\sum_{\pi\in S_{d-1}}\sgn(\pi)
Q_\alpha(k,\lambda_1+d-1-\pi(1),\lambda_2+d-2-\pi(2),\dots,\lambda_{d-2}+2-\pi(d-2)).
\]
For each permutation $\pi \in S_{d-1}$, denote
$\lambda_\pi =
(\lambda_1+d-1-\pi(1),\lambda_2+d-2-\pi(2),\dots,\lambda_{d-2}+2-\pi(d-2))$.
Using this notation we obtain the formula for the multiplicity
$a_\lambda$ of the isotypic component corresponding to $\lambda$
inside $S^\mu(S^k W)$ for $\mu$ a partition of~$d$:
\[
(-1)^{{d-1}\choose{2}}\bigg(\sum_{\alpha\vdash d}\chi_\mu(\alpha)\frac{D_\alpha}{d!}\sum_{\pi\in
  S_{d-1}}\sgn(\pi)
Q_\alpha(k,\lambda_\pi)\bigg).
\]
The summand for the partition $\alpha=(1,\dots,1)$ can be made
explicit:
\begin{equation}\label{eq:formula}
\frac{\dim \mu}{d!}\# P_{k,|\mu|}^\lambda+
(-1)^{{d-1}\choose{2}}\bigg(\sum_{\alpha\vdash d, \alpha\neq (1,\dots,1)}\chi_\mu(\alpha)\frac{D_\alpha}{d!}\sum_{\pi\in
S_{d-1}}\sgn(\pi) Q_\alpha(k,\lambda_\pi)\bigg),
\end{equation}
where $\dim \mu=\chi_\mu(1,\dots,1)$ is the value of the character
$\chi_\mu$ on the trivial permutation and thus equal to the dimension
of the representation of $S_{|\mu|}$ corresponding to~$\mu$. We may
identify $S_{d-1}$ with the Weyl group~$\W$. Let $\rho$ be half of the
sum of positive weights. For esthetic reasons, we may rewrite the
above formulas as follows
\[
(-1)^{{d-1}\choose{2}}\left(\sum_{\alpha\vdash
  d}\chi_\mu(\alpha)\frac{D_\alpha}{d!}\sum_{\pi\in W}\sgn(\pi)
  Q_\alpha(k,\lambda+\rho-\pi(\rho))\right).
\]
All together, we have reduced the problem of finding the coefficients
of the plethysm to computing the piecewise quasi-polynomials
$Q_\alpha$ that count the number of $(\alpha,\lambda)$-matrices.  Let
$\alpha$ be a partition with $a$ parts.  The integral
$(a\times(d-1))$-matrices form an $a(d-1)$-dimensional lattice and the
linear equations in Definition~\ref{def:almatrix} define hyperplanes
in this lattice.  When $L$ denotes the resulting affine sublattice,
the $(\alpha,\lambda)$-matrices are simply the nonnegative integer
points in~$L$.  Alternatively, let $P_{\alpha,\lambda}$ (not to be
confused with $P_{k,d}$) be the (rational) polytope
$(L\otimes_\Z \Q) \cap \Q^{a(d-1)}_{\ge 0}$.  It is a polytope since
each coordinate is nonnegative and bounded from above
by~$\max~\lambda_i$.  The number of $(\alpha,\lambda)$-matrices equals
$\# P_{\alpha,\lambda}$, the number of integral points
in~$P_{\alpha,\lambda}$. It is also worth noting that for any
partition $\alpha$, the polytope $P_{\alpha,\lambda}$ can be obtained
from the $P_{(1,\dots,1),\lambda}$ by a series of hyperplane cuts
given by equalities of coordinates. The polytopes
$P_{(1,\dots,1),\lambda}$ are \emph{transportation polytopes}, well
studied objects in combinatorics and
optimization~\cite{klee1968facets, bolker1972transportation,
balinski1993signature, de2013combinatorics, liu2013perturbation}.

The following remark follows by combining
Proposition~\ref{prop:dualities} and Lemma~\ref{lem:redukcja}.
\begin{rem}\label{r:extend}
Let $\mu$ be a partition of $d$ and $\lambda =
(\lambda_1,\dots,\lambda_{d})$ with $\sum \lambda_i=dk$.  The
multiplicity of $\lambda$ in $S^\mu(S^k)$ equals
\begin{enumerate}
\item the multiplicity of $(\lambda_1-\lambda_d,\dots,\lambda_{d-1}-\lambda_d,0)$ in $S^\mu(S^{k-\lambda_d})$ if $\lambda_d$ is even,
\item the multiplicity of $(\lambda_1-\lambda_d,\dots,\lambda_{d-1}-\lambda_d,0)$ in $S^{\mu^\vee}(S^{k-\lambda_d})$ if $\lambda_d$ is odd, where $\mu^\vee$ is the transpose of $\mu$.
\end{enumerate}
Additionally, the value $\lambda_1-\lambda_d$ is determined by the
equation $d(k-\lambda_d) = \sum_{i=1}^{d-1} \lambda_i -
(d-1)\lambda_d$.  Consequently, our implementation uses arguments
\[
(b_1,\dots,b_{d-2}, s) =
(\lambda_{d-1}-\lambda_d,\dots,\lambda_2-\lambda_d,
k-\lambda_d).\] 
\end{rem}

\begin{rem}[Stable multiplicites]
Fix an integer $d$ and let $\lambda$ be a Young tableau.  For every
sufficiently large $k$, we can construct another tableau $\lambda'(k)$
by adding a new first row to $\lambda$ such that $|\lambda'(k)| = dk$.
As a function of $k$, the multiplicity of the isotypic component
$\lambda'(k)$ in $S^d(S^k)$ becomes eventually constant as $k$ grows
\cite{MR1037395, carre1992plethysm, brion1993stable, MR1651092}.  This
fact follows easily in our setting.  Indeed, note that the desired
multiplicity is a function of counts of $(\alpha,\lambda')$-matrices.
Now when $k$ is very large each possible filling of the columns $2$
to~$a$ of an $(\alpha,\lambda')$-matrix (restricted by the conditions
coming from $\lambda$) can be uniquely completed.
\end{rem}

\begin{rem}
Another possible approach to lattice point counting problems is
through Brion-Vergne formula \cite[p.~802
Theorem~(ii)]{brion1997residue} or \cite{baldoni2006volume} for vector
partition functions.  It provides an expression for the number of
lattice points in polytopes depending on shifts of facets.  Our
approach here is much more elementary.
\end{rem}

\begin{rem}\label{r:chambers}
The lattice point enumerators $Q_\alpha$ have chamber decompositions
into polyhedral cones.  We believe that the same fact also holds for
the whole expression in~\eqref{eq:formula}.  This can not be deduced
from the formula directly since shifting the arguments of $Q_\alpha$
by $\pi$ creates small bounded chambers.  This fact also complicates
our computations since the software is incapable of unifying chambers,
even if the quasi-polynomials on neighboring chambers agree.
Once there is theorem that guarantees a chamber decomposition into
cones, the computation should be revisited, because then the cones can
be computed in advance and the quasi-polynomials can be determined
using~\eqref{eq:formula}.  A possible approach to this problem is
outlined in Remark~\ref{r:symplectic}.
\end{rem}

\begin{rem}\label{r:symplectic}
A very general theory of Meinrenken and Sjamaar on the representation
theory of moment bundles on symplectic manifolds may be applicable to
the plethysm.
More specifically, let $M$ be a symplectic manifold with a Hamiltonian
action of a connected compact Lie group~$G$.  Let $L$ be a
$G$-equivariant line bundle and denote $RR(M,L)$ the push-forward of
$L$ to a point.  One may view $RR(M,L)$ either as a complex of
representations $H^i(M,L)$ with trivial derivations, or as an element
of equivariant $K$-theory (the representation ring).  Now let
$N^m(\lambda)$ denote the multiplicity of the representation
corresponding to a partition $\lambda$ in $RR(M,L^{\otimes m})$.
Corollary~2.12 in \cite{meinrenken1999singular} says that for every
moment bundle $L$ on $M$ the function $N^m(\lambda)$ (as a function of
$m$ and $\lambda$) is a piecewise quasi-polynomials with closed cones
as chambers.  In particular, each ray is contained in a single
chamber.

To get a result for plethysm, one has to find a suitable manifold $M$,
line bundle $L$ and group~$G$.  Results of
Brion~\cite{brion1993stable} show how to get the ingredients.  A
graded module structure on $\sum_k S^\mu(S^{k\nu}V)$ can be obtained
as follows. Let $X$ be the affine cone over the unique closed orbit in
$\p(S^\nu(V))$, i.e.,~$X=\Spec\left(\sum_k S^{k\nu}V \right)$.  Let
$T:=\{(t_1,\dots,t_{|\mu|})\subset (\C^*)^{|\mu|}:\prod t_i=1\}$ be
the $|\mu|-1$ dimensional torus.  The semidirect product
$\Gamma := S_{|\mu|} \ltimes T$ acts on
$\C[X]^{\otimes |\mu|}\otimes [\mu]$, where $[\mu]$ is the
representation of $S_{|\mu|}$.  The invariants of $\Gamma$ are
isomorphic, as a graded module, to~$\sum_k S^\mu(S^{k\nu}V)$.  Now
assume $\nu = (1)$ and $\mu = (|\mu|)$.  We obtain a graded algebra
structure on $\C[X]^{\otimes |\mu|}$ which in this case is just a
tensor power of a polynomial ring with $\Gamma$ and $C^* \times GL(V)$
actions.  In particular, the corresponding variety is smooth.  Since
the actions of $\Gamma$ and $\C^*\times GL(V)$ commute, we may
identify the isotypic component corresponding to $\lambda$ in the
plethysm with the invariants
$(\C[X]^{\otimes |\mu|} \otimes [\mu])^\Gamma$.  Hence, the space of
global sections of the line bundle $\mathcal{O}(k)$ on
$\Proj \left( \C[X]^{\otimes |\mu|} \right)$ acquires an additional
action of the finite group~$S_{|\mu|}$.  Meinrenken-Sjamaar's result
does not directly apply since the factor $S_{|\mu|}$ makes the group
nonconnected.  As pointed out to us by Michele Vergne, the theory
could be extended to this case. We leave this for future work, but the
feasibility has been demonstrated by Manivel, who used the above
method to get structural results about the asymptotics of Kronecker
coefficients \cite[Section 2.4]{manivel2014asymptotics}.
\end{rem}

\section{Asymptotic behavior}\label{sec:asymptotic}
Our main formula \eqref{eq:formula} also provides insight into the
asymptotical properties of plethysm. The main aim here is to identify
the leading terms of the piecewise quasi-polynomials that we
obtain. As already conjectured by Howe \cite{Howe87}, it is natural to
expect that the leading terms come from the polytope of highest
dimension, i.e.,~from the coefficient in the tensor product.  This is
not obvious since the contribution of a polytope in the
quasi-polynomial is \emph{not} of degree equal to the dimension of the
polytope.  The reason is the signed summations in the formula which
decrease the degree.  Below we show how to control this type of
cancelation, which allows us to obtain the asymptotics. Our strategy
is as follows:
\begin{enumerate}
\item Introduce a new variable $s$.
\item Multiply each variable in the quasi-polynomial by $s$ and ask
for the leading term with respect to the degree of $s$ in order to
identify the leading term.
\item Show that the contribution from polytopes of smaller dimension
is strictly smaller than the contribution from the
Littlewood--Richardson rule.
\end{enumerate}

More precisely, we compute the multiplicity of $s\lambda$ inside
$S^\mu(S^{sk})$ for $s\in \N$ for regular~$\lambda$, that is, when
$\lambda_i \neq \lambda_j$ for all $i\neq j$.  In this case, all
polytopes appearing in the computation of~\eqref{eq:formula} are
dilations of $P_{\alpha,\lambda}$ and~$P_{k,|\mu|}^\lambda$.  The
Hilbert-Ehrhart quasi-polynomials of these polytopes are particularly
important for us. We can compute the leading term of
$\# P_{sk,|\mu|}^{s\lambda}$, which is
$\vol P_{k,|\mu|}^\lambda s^{\dim P_{k,|\mu|}^\lambda}$.  One expects
this term to be the leading term of the entire formula, as the
dimension of $P_{\alpha,\lambda}$ is largest when
$\alpha = (1,\dots,1)$, the Littlewood--Richardson contribution.
Indeed, assume that $\alpha$ has $a$ parts and $\lambda$ has $l$
parts.  As we are only interested in partitions $s\lambda$, we can
assume that we work with exactly $l$ variables.  We have
$\dim P_{\alpha,\lambda}=(a-1)(l-1)$.  In contrast if
$\lambda=(\lambda_1^{a_1},\dots,\lambda_q^{a_q})$, with
$l=\sum_{j=1}^q{{a_j}}$, then
\[
\dim P_{k,d}^\lambda=1+\dots+(l-1)+(l-1)(d-l)-\sum_{j=1}^q{{a_j}\choose 2}=(l-1)(d-l/2-1)-\sum_{j=1}^q{{a_j}\choose 2}.
\]
We omit the easy but tedious proof of this fact as we do not need it
below.  Note that for regular $\lambda$, the dimension equals
$(l-1)(d-l/2-1)$.  One is tempted to conjecture, as in
\cite[3.6(d)]{Howe87}, that the leading term of the multiplicity of
the isotypic component corresponding to $s\lambda$ comes from
$\#P_{k,d}^\lambda$, as above.  It is obvious that this term appears,
due to the Littlewood--Richardson rule.  The main difficulty in
bounding the contributions from the other terms is that the counting
function is a piecewise quasi-polynomial: the shifts of argument by
the permutation $\pi$ may change both the chamber and the coefficients
of the polynomial.
We now provide the estimates for the function $\sum_{\pi\in
S_{d-1}}\sgn(\pi) Q_\alpha(k,\lambda_\pi)$.
\begin{lema}\label{l:qpolyasymptotics}
Suppose $\alpha$ has $a < d$ parts and $\lambda$ has $l$ parts.  The
leading coefficient of
\[
\sum_{\pi\in S_{l}}\sgn(\pi) Q_\alpha(sk,(s\lambda)_\pi)
\]
has degree strictly smaller than $(l-1)(d-l/2-1)$ with respect to the
variable~$s$.
\end{lema}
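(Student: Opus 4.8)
The plan is to prove the sharper statement that the displayed sum has $s$-degree at most $(a-1)(l-1)-\binom{l}{2}$. Since $\dim P_{\alpha,\lambda}=(a-1)(l-1)$, a short computation gives
\[
(a-1)(l-1)-\binom{l}{2}=(l-1)\left(a-1-\tfrac l2\right)=(l-1)\left(d-\tfrac l2-1\right)-(l-1)(d-a),
\]
and the correction $(l-1)(d-a)$ is at least $l-1\ge 1$ because $a<d$; so this sharper bound indeed forces a degree strictly below $(l-1)(d-l/2-1)$. (We may assume $l\ge 2$; for $l=1$ one has $\lambda=(dk)$ and the statement is trivial.) Each individual summand $Q_\alpha(sk,(s\lambda)_\pi)$ already has $s$-degree exactly $(a-1)(l-1)$: for $\lambda$ regular and $s$ large, $s\lambda$ sits deep inside a single (conical) piece of $Q_\alpha$ and the bounded shift does not leave it, so $Q_\alpha(sk,(s\lambda)_\pi)$ is the Ehrhart quasi-polynomial of $P_{\alpha,\lambda}$ evaluated along the ray through $(k,\lambda)$ up to a bounded shift. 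Thus all cancellation must come from the signs $\sgn(\pi)$, and the task is to show the $S_l$-alternating sum destroys $\binom{l}{2}$ orders of growth.

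Write $(s\lambda)_\pi=s\lambda+w_\pi$ with $w_\pi=\rho-\pi(\rho)$ and $\rho=(l-1,l-2,\dots,1,0)$, so $(w_\pi)_i=\rho_i-\rho_{\pi(i)}$ is independent of~$s$. I would Taylor-expand $Q_\alpha$ around $s(k,\lambda)$; to legitimize this, first restrict $s$ to a fixed residue class modulo the period of~$Q_\alpha$, so that the residue class of each point $(sk,(s\lambda)_\pi)$ is constant along the class and $Q_\alpha$ restricts there to a genuine polynomial of degree $(a-1)(l-1)$ whose top homogeneous part is the volume polynomial of $P_{\alpha,\lambda}$ — the same for every~$\pi$. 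The coefficient of $s^{(a-1)(l-1)-j}$ then features the differential operator $\mathcal{D}_j:=\sum_{\pi\in S_l}\sgn(\pi)\,(w_\pi\cdot\nabla)^j$, and the combinatorial heart of the proof is the identity $\mathcal{D}_j=0$ for all $j<\binom{l}{2}$. To prove it, expand $\mathcal{D}_j=\sum_{|\beta|=j}\binom{j}{\beta}\big(\sum_\pi\sgn(\pi)\,w_\pi^{\beta}\big)\partial^{\beta}$ and, inside each $w_\pi^{\beta}=\prod_i(\rho_i-\rho_{\pi(i)})^{\beta_i}$, expand the powers binomially; pulling out the $\pi$-independent constants leaves, after summing over $\pi$, a combination of terms $\sum_\pi\sgn(\pi)\prod_i\rho_{\pi(i)}^{c_i}$ with $\sum_i c_i\le j$. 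Each such term equals the determinant $\det\big(\rho_v^{\,c_i}\big)_{1\le i,v\le l}$, and since $\rho_v=l-v$ is affine in the column index~$v$, the $i$-th row is a polynomial of degree $c_i$ in~$v$ and hence lies in the $(c_i+1)$-dimensional space of such evaluated polynomials. Ordering $c_1\le\dots\le c_l$, the first $m$ rows lie in a space of dimension $\le c_m+1$; if $c_m<m-1$ for some~$m$, those $m$ rows are dependent and the determinant vanishes — and such an $m$ exists, for otherwise $\sum_m c_m\ge\sum_{m=1}^l(m-1)=\binom{l}{2}>j\ge\sum_m c_m$. So every determinant that occurs vanishes and $\mathcal{D}_j=0$.

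With $\mathcal{D}_j=0$ for $j<\binom{l}{2}$, the contribution of the top (volume) layer of $Q_\alpha$ to the alternating sum has $s$-degree at most $(a-1)(l-1)-\binom{l}{2}$, using that this layer is residue-independent so the operator identity applies to a fixed function. Controlling the remaining, residue-dependent Ehrhart layers is the step I expect to be the main obstacle. The cleanest route is to recognize that $\sum_{\pi\in S_l}\sgn(\pi)\,Q_\alpha(sk,(s\lambda)_\pi)$ is, up to a fixed sign, the multiplicity of $S^{s\lambda}$ in $\prod_i h_{sk}(x^{\alpha_i})$ — i.e.\ the alternating operator is exactly the Weyl numerator operator for $GL_l$ — so it factors as a composition of $\binom{l}{2}=\ell(w_0)$ discrete divided-difference operators, each of which lowers the degree of an arbitrary quasi-polynomial by one: a difference of a floor-polynomial along a bounded shift is bounded, and dividing by the relevant linear form is degree-lowering once one knows the numerator vanishes on the corresponding wall, which follows from the symmetry of $Q_\alpha$ in $\lambda_1,\dots,\lambda_l$. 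Reconciling the periodicities of the Ehrhart data with this factorization is the delicate bookkeeping; granting it, the alternating sum has $s$-degree at most $(a-1)(l-1)-\binom{l}{2}$, which by the first paragraph is strictly less than $(l-1)(d-l/2-1)$.
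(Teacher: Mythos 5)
Your whole plan hinges on the sharper claim that $\sum_{\pi}\sgn(\pi)Q_\alpha(sk,(s\lambda)_\pi)$ has $s$-degree at most $(a-1)(l-1)-\binom{l}{2}$, and that claim is false. The Vandermonde/Taylor argument in your second paragraph is fine as far as it goes (indeed $\sum_\pi\sgn(\pi)w_\pi^{\beta}=0$ for $|\beta|<\binom{l}{2}$), but it only kills the top homogeneous (volume) layer of $Q_\alpha$; the step you defer as ``delicate bookkeeping'' is where the statement breaks. Alternating or differencing a \emph{quasi}-polynomial need not lower its degree at all (already $q(n)=(-1)^nn^2$ has $q(n+1)-q(n)$ of degree $2$), and the bounded shifts $(s\lambda)_\pi$ also move the argument across chambers, so the periodic and piecewise layers contribute at top order. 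Concretely, take $d=4$, $\alpha=(2,2)$ (so $a=2$, $w=2$, no singleton parts), $k=3$, $\lambda=(6,4,2)$ (so $l=3$). The alternating sum is, up to sign, the coefficient of the Schur polynomial $S^{(6s,4s,2s)}$ in $h_{3s}(x^2)\cdot h_{3s}(x^2)$ in three variables; by parity only $\pi=\mathrm{id}$ and the transposition exchanging the first and third positions contribute, and counting the corresponding $(\alpha,\cdot)$-matrices gives $(s+1)(2s+1)-s(2s+1)=2s+1$, of degree $1$ in $s$, whereas your bound would force degree at most $(a-1)(l-1)-\binom{l}{2}=-1$. (This is consistent with the lemma itself, whose bound here is $3$.) So the $\binom{l}{2}$-fold cancellation simply does not occur for general $\alpha$, and your fallback route via ``the Weyl numerator factors into $\binom{l}{2}$ discrete divided differences, each lowering degree by one'' fails for the same reason. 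Note also that $\prod_i h_{sk}(x^{\alpha_i})$ is only a virtual character, so interpreting the alternating sum as a ``multiplicity'' gives no positivity to work with unless all $\alpha_i=1$.

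The paper's proof is structured precisely around this obstruction and never claims a uniform degree drop. It splits $\alpha$ into its singleton parts $\alpha_0=(1,\dots,1)\vdash h$ and the rest $\alpha'$, writes $Q_\alpha$ as a convolution of $Q_{\alpha'}$ and $Q_{\alpha_0}$, and pushes the sign sum onto the $Q_{\alpha_0}$ factor alone, where (after a sorting argument) $|\sum_\pi\sgn(\pi)Q_{\alpha_0}(\cdots)|$ is an honest multiplicity in $(S^{sk})^{\otimes h}$. Nonnegativity then allows bounding by the total multiplicity of components with at most $l$ rows, estimated by Pieri's rule when $h\ge l$ and by Fulger--Zhou when $h<l$, while $Q_{\alpha'}$ is bounded naively by degree $(w-1)(l-1)$. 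In other words, the genuine cancellation is exploited only where it provably exists (the tensor-power part), and that weaker but correct estimate suffices for the strict inequality with $(l-1)(d-l/2-1)$. If you want to keep part of your argument, your operator identity does show that the volume term of $Q_\alpha$ cancels to order $\binom{l}{2}$; but the content of the lemma lies in controlling exactly the lower, residue- and chamber-dependent layers, for which a different mechanism (such as the paper's positivity argument) is needed.
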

\begin{proof}
Suppose that $\alpha$ has $w$ parts greater than $1$ and $h$ parts
equal to $1$. In particular, $2w+h\leq d$. Each
$(\alpha, (s\lambda)_\pi)$-matrix $M$ is uniquely determined by two
matrices $(M_1, M_2)$, where $M_1$ is the $(w\times l)$-submatrix of
$M$, corresponding to rows with coefficients not equal to one and
$M_2$ the complimentary $(h\times l)$-submatrix.  Let $\alpha'$ be the
partition of $d-h$ obtained from~$\alpha$ by forgetting the
singletons, and let $\alpha_0:=(1,\dots,1) \vdash h$.  Introducing
parameters $i_j$ for $1\leq j\leq l-1$ corresponding to column sums of
$M_1$, we obtain:
\[
Q_\alpha(sk,(s\lambda)_\pi) =
\sum_{i_1=0}^{s\lambda_1+l}\dots\sum_{i_{l-1}=0}^{s\lambda_{l-1}+2}Q_{\alpha'}(sk,(i_1\dots
i_{l-1}))Q_{\alpha_0}(sk,(s\lambda)_\pi-(i_1\dots i_{l-1})).
\]
Note that, if $i_j>s\lambda_j+(l+1-j)-\pi(j)$ then
$Q_{\alpha_0}(sk,(s\lambda)_\pi-(i_1\dots i_{l-1}))=0$, so we could
restrict the summation indices; however, we prefer not to.  We obtain:
\begin{multline*}
\sum_{\pi\in S_{l}}\sgn(\pi) Q_\alpha(sk,(s\lambda)_\pi)= \\
\sum_{i_1=0}^{s\lambda_1+l}\dots\sum_{i_{l-1}=0}^{s\lambda_{l-1}+2}Q_{\alpha'}
(sk,(i_1\dots i_{l-1}))
\left(\sum_{\pi\in S_{l}}\sgn(\pi)Q_{\alpha_0}(sk,(s\lambda)_\pi-(i_1\dots i_{l-1}))\right).
\end{multline*}
We will bound
$\left|\sum_{\pi\in S_{l}}\sgn(\pi)
  Q_\alpha(sk,(s\lambda)_\pi)\right|$
by a polynomial in $s$ of small degree. To do this, it is enough to
bound
\[
\sum_{i_1=0}^{s\lambda_1+l}\dots\sum_{i_{l-1}=0}^{s\lambda_{l-1}+2}Q_{\alpha'}
(sk,(i_1\dots i_{l-1})) \left|\sum_{\pi\in
  S_{l}}\sgn(\pi)Q_{\alpha_0}(sk,(s\lambda)_\pi-(i_1\dots
  i_{l-1}))\right|.
\]
For any sequence of numbers $(\rho_i)_{i=1}^c$ of length $c$, let
$\varsigma \in S_c$ be a permutation that sorts~$\rho$,
i.e.,~$\rho_{\varsigma(i)}\geq \rho_{\varsigma(j)}$, for $i\leq j$.
We denote the sorted sequence by
$\varsigma (\rho) = (\varsigma_i(\rho))_{i=1}^c$.  Let $i_l$ be
defined by $\sum_{j=1}^l i_j=sk(d-h)$.  A technical problem in the
following argument is that the sequence $(s\lambda -(i_1\dots i_{l}))$
may be not ordered.
\begin{align*}
Q_{\alpha_0}\left(sk, (s\lambda)_\pi -(i_1\dots i_{l})\right) & =
Q_{\alpha_0}\left(sk, (s\lambda -(i_1\dots i_{l}))_\pi \right) \\
& = Q_{\alpha_0}\left(sk, \left(s\lambda_j - i_j + l - (j-1) -
\pi (j)\right)_{j=1}^{l-1}  \right) \\
& =Q_{\alpha_0} \left(sk, \left( \varsigma_t \left(\left(s\lambda_j - i_j + l -
    (j-1) \right)_{j=1}^{l} \right) - \pi (\varsigma (t)) \right)_{t=1}^{l-1}
\right).
\end{align*}
The purpose of this computation was simply to sort the arguments of
$Q_{\alpha_0}$ but with the additional complication that the sorting
acts on sequences of length $l$, while only $l-1$ arguments are used
by~$Q_{\alpha_0}$.  Now, if the sequence
$\varsigma \left(\left(s\lambda_j - i_j + l - (j-1) \right)_{j=1}^{l}
\right)$
has two equal entries, then $\sum_\pi \sgn(\pi) Q_\alpha(\cdot)$
vanishes because we can match up terms for the permutations differing
by the transposition exchanging the two corresponding indices.  Note
that we use the symmetry of the counting problem for $Q_\alpha$ too.
In this case, the bound holds trivially.  Now assume that the sequence
is strictly decreasing.  We get that
\[
\left(s\lambda_{\varsigma(j)} - i_{\varsigma(j)} + l - (\varsigma(j)-1) - (l - (j-1))\right)_{j=1}^{l}
\]
is nonincreasing.  Hence,
\begin{multline*}
\left|\sum_{\pi\in S_{l}}\sgn(\pi)
  Q_{\alpha_0}(sk,(s\lambda)_\pi-(i_1,\dots, i_{l-1}))\right| = \\
\left |\sum_{\pi\in S_{l}}\sgn(\pi) Q_{\alpha_0}\left(sk,
  \left(\left(s\lambda_{\varsigma(j)} - i_{\varsigma(j)} + l -
      (\varsigma(j)-1) - (l -
      (j-1))\right)_{j=1}^{l}\right)_{\pi}\right) \right|.
\end{multline*}
Now, by the arguments in Sections~\ref{sec:Littlewood-Richardson}
and~\ref{sec:ChangeBasis} this expression equals the multiplicity of
the isotypic component corresponding to the partition
\begin{equation}\label{eq:varsigma}
\left(s\lambda_{\varsigma(j)} - i_{\varsigma(j)} + l -
  (\varsigma(j)-1) - (l - (j-1))\right)_{j=1}^{l} =
\left(s\lambda_{\varsigma(j)} - i_{\varsigma(j)}  + j - \varsigma(j) \right)_{j=1}^{l}
\end{equation}
inside~$(S^{sk})^{\otimes h}$.  This allows us to bound the degree
with which $s$ may appear separately.  The degree of $s$ in the term
$Q_{\alpha'}(sk,(i_1\dots i_{l-1}))$ can be naively bounded by
$(w-1)(l-1)$, as each entry of the $j$-th column of $M_1$ is bounded
by $s\lambda_j$ plus a constant, and the row and column sums of $M_1$
are fixed.  It thus remains to bound the degree of $s$ in
\begin{equation}\label{eq:suma}
\sum_{i_1=0}^{s\lambda_1+l}\dots\sum_{i_{l-1}=0}^{s\lambda_{l-1}+2} \left|\sum_{\pi\in S_{l}}\sgn(\pi)
  Q_{\alpha_0}\left(sk, \left( \left(s\lambda_{\varsigma(j)} -
      i_{\varsigma(j)}  + j - \varsigma(j) \right)_{j=1}^{l} \right)_{\pi}\right)\right|.
\end{equation}
In the summation over the indices $i_j$, there may arise duplicates
among the partitions~\ref{eq:varsigma}.  However, the number of
duplicates of a given weight is bounded by $d!$, since for a fixed
$\varsigma$, there can be at most one sequence $(i_j)_j$ yielding a
given partition.  For any representation $W$, let $\mathfrak{s}_l(W)$
be sum of multiplicities of all isotypic components indexed by
partitions with at most $l$ parts.  We obtain that \eqref{eq:suma} is
bounded by $d!\mathfrak{s}_l((S^{sk})^{\otimes h})$.  It remains to
bound the $s$-degree of $\mathfrak{s}_l((S^{sk})^{\otimes h})$.  We
distinguish two cases depending on whether $h$ or~$l$ is larger.
\begin{description}[leftmargin=0cm, itemsep=1ex]
\item[Case~1 ($h\geq l$)] By Pieri's rule, the multiplicities of
isotypic components corresponding to partitions with at most $l$ parts
in $(S^{sk})^{\otimes h}$ are determined by the following parameters:
\begin{itemize}
\item One parameter for the number of boxes added to the first row in
the first step (the remaining boxes going into the second row),
\item Two parameters for the number of boxes added to the first and
second rows in step 2,
\item $i\leq l-1$ parameters for the number of boxes added to rows $1$
to $i$ in step~$i$,
\item $(h-l)(l-1)$ parameters for the numbers of boxes added to rows
from $1$ to $l-1$ in steps $l$ to~$h-1$.
\end{itemize}
This bounds the exponent of $s$ by
$1+\dots+l-1+(h-l-1)(l-1)=(l-1)(h-l/2-1)$. All together we obtain the
bound $(l-1)\left(w + h -1 -l/2-1\right)$ which is strictly smaller than
$(l-1)(d-l/2-1)$.

\item[Case~2 ($h< l$)] The degree of $s$ inside
\[
\sum_{i_1=1}^{s\lambda_1+l}\dots\sum_{i_{l-1}=1}^{s\lambda_{l-1}+2}\sum_{\pi\in
S_{l}} \sgn(\pi)Q_{\alpha_0}(sk,(s\lambda)_\pi-(i_1\dots i_{l-1}))
\]
is bounded by the degree of $s$ in the total sum of all multiplicities
in the decomposition of~$(S^{sk})^{\otimes h}$.  We could now proceed
as above, but~\cite[Theorem~1.2(ii)]{fulger2012asymptotic} directly
gives that this degree equals~$h\choose 2$, and hence, the total
degree in which $s$ can appear is at most
\[
(w-1)(l-1)+{h\choose 2}.
\]
After using $\binom{h}{2} \leq (l-1)(h-1)/2$ this is seen as strictly
smaller than $(l-1)(d-l/2-1)$. \qedhere
\end{description}
\end{proof}

\begin{thm}\label{thm:asympt}
Fix a partition $\mu$ of~$d$.  The multiplicity of the isotypic
component corresponding to $\lambda$ inside $S^\mu(S^k(V))$ is a
piecewise quasi-polynomial in $k$ and $\lambda$. In each
full-dimensional conical chamber, its highest degree term equals
$\frac {\dim \mu}{d!}$ times the highest degree term of the
multiplicity of $\lambda$ in $S^k(V)^{\otimes d}$.
\end{thm}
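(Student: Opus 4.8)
The plan is to read the theorem off the explicit formula~\eqref{eq:formula}, using Lemma~\ref{l:qpolyasymptotics} to discard every contribution except the Littlewood--Richardson term. The first assertion, that the multiplicity is a piecewise quasi-polynomial in $(\lambda,k)$, is immediate from the construction: in~\eqref{eq:formula} the term $\#P_{k,d}^\lambda$ is the lattice point count of the parametric rational polytope of Definition~\ref{def:pieriPoly}, hence piecewise quasi-polynomial by parametric Ehrhart theory, and each $Q_\alpha$ is a piecewise quasi-polynomial, so the finite signed combination $\sum_\pi\sgn(\pi)Q_\alpha(k,\lambda_\pi)$ is one too, the shifts by $\pi$ merely refining the chamber decomposition. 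By Remark~\ref{r:extend} and Lemma~\ref{lem:redukcja} we may assume $\lambda$ has at most $d-1$ parts, so that~\eqref{eq:formula} applies verbatim; the general case follows since both the plethysm multiplicity and the tensor-power multiplicity transform by the same linear change of parameters under these reductions (note $\dim\mu^\vee=\dim\mu$).

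Fix a full-dimensional conical chamber $C$, on which the multiplicity restricts to a single quasi-polynomial $q$. A generic point of $C$ has $\lambda$ with exactly $d-1$ distinct positive parts, i.e.\ $\lambda$ regular with $l=d-1$. For such $\lambda$ and $s\in\N$ all defining data of $P_{k,d}^\lambda$ scale, so $P_{sk,d}^{s\lambda}=s\,P_{k,d}^\lambda$ and $\#P_{sk,d}^{s\lambda}$ is the Ehrhart quasi-polynomial of the rational polytope $P_{k,d}^\lambda$: it has degree $\dim P_{k,d}^\lambda=(l-1)(d-l/2-1)=\binom{d-1}{2}$ in $s$, with leading coefficient $\vol P_{k,d}^\lambda$, which is independent of the residue of $s$. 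Moreover, since $\psi_{(1,\dots,1)}\circ h_k=h_k(x)^d$ is the character of $(S^kW)^{\otimes d}$ (Remark~\ref{Rem:Notation}), Proposition~\ref{prop:expansion_coeff} identifies $\#P_{k,d}^\lambda$ with the multiplicity of $\lambda$ in $(S^kW)^{\otimes d}$.

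Next I would apply Lemma~\ref{l:qpolyasymptotics} to each partition $\alpha\vdash d$ with $\alpha\neq(1,\dots,1)$: such $\alpha$ has $a\le d-1<d$ parts, hence $\sum_{\pi\in S_{d-1}}\sgn(\pi)Q_\alpha(sk,(s\lambda)_\pi)$ has $s$-degree strictly below $(l-1)(d-l/2-1)=\binom{d-1}{2}$. Substituting into~\eqref{eq:formula} yields
\[
q(s\lambda,sk)=\frac{\dim\mu}{d!}\,\#P_{sk,d}^{s\lambda}+O\big(s^{\binom{d-1}{2}-1}\big),
\]
so the $s$-leading term of $q(s\lambda,sk)$ is $\frac{\dim\mu}{d!}\vol(P_{k,d}^\lambda)\,s^{\binom{d-1}{2}}$, which is precisely $\frac{\dim\mu}{d!}$ times the $s$-leading term of $\#P_{sk,d}^{s\lambda}$, i.e.\ of the multiplicity of $s\lambda$ in $(S^{sk}W)^{\otimes d}$. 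In particular, the leading coefficient being positive, no cancellation occurs at top degree, and $q$ has total degree exactly $\binom{d-1}{2}$ on $C$.

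Finally I would pass from this ray-wise statement to the asserted identity of top-degree terms. Writing $q$ as a polynomial in floor functions of linear forms, replacing each $\lfloor\ell\rfloor$ by $\ell$ alters $q$ only by terms of strictly lower total degree (fractional parts being bounded), so the degree-$\binom{d-1}{2}$ homogeneous part $q_{\mathrm{top}}$ of $q$ is a genuine polynomial. For every regular rational $(\lambda_0,k_0)$ in the interior of $C$ one recovers $q_{\mathrm{top}}(\lambda_0,k_0)$ as the $s$-leading coefficient of $q(s\lambda_0,sk_0)$, which by the previous step equals $\frac{\dim\mu}{d!}$ times the $s$-leading coefficient of the $(S^{sk_0}W)^{\otimes d}$-multiplicity of $s\lambda_0$. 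Since such points are dense in $C$ and both sides are polynomial in $(\lambda,k)$, the identity of top-degree forms holds throughout $C$. The only substantial work is Lemma~\ref{l:qpolyasymptotics}, which is already in hand; what remains is the degree bookkeeping above together with the elementary facts that $\#P_{k,d}^\lambda$ is the tensor-power multiplicity and that the top-degree part of a quasi-polynomial is a polynomial determined by its values along dense rays.
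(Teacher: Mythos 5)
Your proposal is correct and takes essentially the same route as the paper: both arguments read the asymptotics off \eqref{eq:formula}, invoke Lemma~\ref{l:qpolyasymptotics} to show that every $\alpha\neq(1,\dots,1)$ contributes in strictly lower $s$-degree along rays through regular $\lambda$, identify the Littlewood--Richardson term $\#P_{sk,d}^{s\lambda}$ with the multiplicity in $(S^{sk}W)^{\otimes d}$ whose lead term is $\vol P_{k,d}^{\lambda}\,s^{\dim P_{k,d}^{\lambda}}$, and conclude using the fact that every full-dimensional conical chamber contains a regular~$\lambda$. Your additional bookkeeping (the reduction to at most $d-1$ parts, and the passage from dense regular rays to the whole chamber) only makes explicit what the paper compresses into a short contradiction argument.
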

\begin{proof}
Let $\alpha \neq (1,\dots,1)$ and suppose that
$\sum_\pi \sgn(\pi) Q_\alpha (k, \lambda)$ has a leading term of
degree greater than or equal to $(l-1)(d-l/2-1)$.  Pick a $\lambda$
where the leading term does not vanish.  Using this $\lambda$ in
Lemma~\ref{l:qpolyasymptotics} yields a contradiction.  Now the result
follows since for regular $\lambda$ the contribution from
$\alpha=(1,\dots,1)$ is of degree $(l-1)(d-l/2-1)$, and each
full-dimensional conical chamber contains a regular~$\lambda$.
\end{proof}
\begin{con}
Let $\mu$ be a partition of $d$ and
$\lambda=(\lambda_1^{a_1},\dots,\lambda_q^{a_q})$ be a partition of
$kd$ with $l=\sum_{j=1}^q a_q$ parts. The multiplicity of $s\lambda$
in $S^\mu(S^{sk})$ is a quasi-polynomial in $s$ whose lead term has
degree
\[
\dim P_{k,d}^\lambda=1+\dots+(l-1)+(l-1)(d-l)-\sum_{j=1}^q{{a_j}\choose 2}=(l-1)(d-l/2-1)-\sum_{j=1}^q{{a_j}\choose 2},
\]
and coefficient $\frac{\dim \mu}{d!}\vol P_{k,d}$.
\end{con}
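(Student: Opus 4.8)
\medskip
\noindent\textbf{Proof strategy.}
The plan is to imitate the proof of Theorem~\ref{thm:asympt}: restrict the master formula~\eqref{eq:formula} to the ray $s\mapsto(s\lambda,sk)$, check that the Littlewood--Richardson summand produces exactly the stated leading term, and bound every other summand below that degree. The one new ingredient needed is a refinement of Lemma~\ref{l:qpolyasymptotics} that feels the repeated parts of~$\lambda$.

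First I would pin down the leading term. The inequalities defining $P_{k,d}$ in Definition~\ref{def:pieriPoly}, together with the slicing equations $\sum_j x_i^j=\lambda_i$, are homogeneous in $(k,x_i^j)$, so $P_{sk,d}^{s\lambda}=s\,P_{k,d}^\lambda$ and $s\mapsto\#P_{sk,d}^{s\lambda}$ is the Ehrhart quasi-polynomial of the rational polytope $P_{k,d}^\lambda$. Hence it has degree $\dim P_{k,d}^\lambda=(l-1)(d-l/2-1)-\sum_{j=1}^q\binom{a_j}{2}$ --- the value whose derivation is the easy but tedious computation already recorded --- and constant leading coefficient equal to the relative volume $\vol P_{k,d}^\lambda$. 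So the first summand of~\eqref{eq:formula} contributes precisely $\frac{\dim\mu}{d!}\,\vol P_{k,d}^\lambda\cdot s^{\dim P_{k,d}^\lambda}$.

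The crux is the following sharpening of Lemma~\ref{l:qpolyasymptotics}: for $\alpha\vdash d$ with $a<d$ parts and $\lambda=(\lambda_1^{a_1},\dots,\lambda_q^{a_q})$ with $l=\sum_j a_j$ parts,
\[
\deg_s\Big(\textstyle\sum_{\pi\in S_l}\sgn(\pi)\,Q_\alpha(sk,(s\lambda)_\pi)\Big)<(l-1)(d-l/2-1)-\sum_{j=1}^q\binom{a_j}{2}.
\]
I would prove this by factoring the alternating sum through the Young subgroup $S_{a_1}\times\cdots\times S_{a_q}\subseteq S_l$ permuting the blocks of equal parts, writing $\sum_{\pi\in S_l}\sgn(\pi)Q_\alpha(sk,(s\lambda)_\pi)$ as a sum over cosets of inner alternating sums over $\prod_j S_{a_j}$. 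Inside the $j$-th block the base arguments $s\lambda_p$ all coincide, so the arguments handed to the (symmetric) quasi-polynomial $Q_\alpha$ cluster within an $O(1)$ window; antisymmetrizing over that window is a discrete divided-difference operation and should drop the $s$-degree by $\binom{a_j}{2}$, so the inner sums already carry a factor $s^{-\sum_j\binom{a_j}{2}}$. It then remains to bound the outer coset sum by re-running the estimates from the proof of Lemma~\ref{l:qpolyasymptotics} --- the splitting $M=(M_1,M_2)$ into the $w$ rows with large parts and the $h$ singleton rows, the sorting argument producing~\eqref{eq:varsigma}, and the two cases $h\ge l$ and $h<l$ --- which gives a degree strictly below $(l-1)(d-l/2-1)$, so the product lands strictly below $(l-1)(d-l/2-1)-\sum_j\binom{a_j}{2}$. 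Granting this, formula~\eqref{eq:formula} restricted to the ray is, for all large $s$, a single quasi-polynomial in $s$ whose leading term is that of the Littlewood--Richardson summand, which is the assertion.

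The hard part is making ``divided difference drops the degree by $\binom{a_j}{2}$'' rigorous and showing that this saving is genuinely additive with the estimates already inside Lemma~\ref{l:qpolyasymptotics}. The trouble is that $Q_\alpha$ is only \emph{piecewise} quasi-polynomial and the shifts $\rho-\pi(\rho)$ within a block may straddle several chambers --- the very phenomenon flagged in Remark~\ref{r:chambers} and responsible for the bookkeeping in Lemma~\ref{l:qpolyasymptotics}; a crude outer bound (for instance the Case~1 estimate, degree $(l-1)(d-l/2-3)$) has slack only $2(l-1)$ over $(l-1)(d-l/2-1)$, which is too little when $\lambda$ is a single long block, so the block saving really must be extracted on top of that slack. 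A route that sidesteps this analysis is the one in Remark~\ref{r:symplectic}: if Meinrenken--Sjamaar's theorem extended to the nonconnected group $S_{|\mu|}\ltimes T$ acting on $\C[X]^{\otimes|\mu|}\otimes[\mu]$, the multiplicity function would be piecewise quasi-polynomial with \emph{closed conical} chambers, hence a single quasi-polynomial in $s$ along the ray through $\lambda$, with degree and leading coefficient read off from the face of the moment polytope over that ray, namely $\dim P_{k,d}^\lambda$ and $\frac{\dim\mu}{d!}\vol P_{k,d}^\lambda$; supplying that nonconnected extension would then be the only remaining obstacle.
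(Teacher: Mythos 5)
First, be aware that the statement you are proving is stated in the paper as a \emph{conjecture}: the paper offers no proof, only the observations that the claimed degree is an obvious upper bound and that Theorem~\ref{thm:asympt} settles the regular case ($a_j=1$ for all $j$). So your proposal cannot be matched against a paper proof; the question is whether it closes the gap the authors left open, and it does not. The Littlewood--Richardson part of your argument is fine (homogeneity of the constraints gives $P_{sk,d}^{s\lambda}=sP_{k,d}^{\lambda}$, so the first summand of~\eqref{eq:formula} is an Ehrhart quasi-polynomial with constant leading coefficient $\vol P_{k,d}^{\lambda}$), but the entire content of the conjecture sits in your proposed sharpening of Lemma~\ref{l:qpolyasymptotics}, and that is exactly where the argument is missing rather than merely unpolished.

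The specific step that would fail as stated is ``antisymmetrizing over an $O(1)$ window is a divided difference and drops the $s$-degree by $\binom{a_j}{2}$.'' That heuristic is valid for honest polynomials, but $Q_\alpha$ is a \emph{piecewise quasi-polynomial}: a finite difference of $\epsilon(x)x^N$ with $\epsilon$ periodic and nonconstant does not lose degree, so only the top coefficient (which is a piecewise volume polynomial) is guaranteed to cancel, giving a drop of one order per difference at best and nothing like the full $\sum_j\binom{a_j}{2}$ without a finer analysis of the lower quasi-periodic coefficients. Worse, a $\lambda$ with repeated parts lies on walls of the chamber decomposition, so the shifted arguments $(s\lambda)_\pi$ straddle different chambers for \emph{all} $s$, not just small $s$; the sorting trick of Lemma~\ref{l:qpolyasymptotics} exploits vanishing when two sorted entries coincide, and on the ray through a non-regular $\lambda$ those coincidences are persistent rather than exceptional, so the paper's bookkeeping does not simply ``re-run'' on the cosets. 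You correctly identify both obstacles yourself, and your fallback via Remark~\ref{r:symplectic} rests on an extension of Meinrenken--Sjamaar to the nonconnected group $S_{|\mu|}\ltimes T$ that the paper explicitly leaves as future work (and even granted conical chambers, you would still need an argument identifying the leading coefficient on a wall with $\frac{\dim\mu}{d!}\vol P_{k,d}^{\lambda}$, since the boundary quasi-polynomial need not be the restriction of the interior one). So what you have is a sensible reduction of the conjecture to a divided-difference estimate for piecewise quasi-polynomials, not a proof of it.
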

Note that the degree in the conjecture is an obvious upper bound and
that Theorem~\ref{thm:asympt} yields the the conjecture whenever
$\lambda$ is a regular partition, i.e.,~when $\lambda$ belongs to the
interior of the cone of valid parameters.

\begin{rem}
An interesting question asked by Mulmuley is whether counting
functions on individual rays are Ehrhart functions of some rational
polytopes.  In some cases, we can provide a negative answer using
reciprocity (cf.~\cite{King, briand2009reduced}).  For the details,
see~\cite{plethNotEhrhart}.
\end{rem}

\ifx\form\WITHAPP
\section{Appendix}
\label{sec:appendix}

\subsection{Vector partition functions}
\label{sec:vect-part-funct}
Consider a polyhedral cone in standard representation $\mathcal{C} =
\{Ax \geq b\} \subset \Q^{N}$, and a linear map $\pi: \mathcal{C} \to
\Q^{n}$.  The image of $\pi$ is a polyhedral cone denoted
$\mathcal{D}$.  In this situation, the preimage of an integral point
in $\mathcal{D}$ is a polyhedron and we are interested in the
(possibly infinite) number of integral points it contains.  The
\emph{counting function} is
\begin{align*}
  \phi  & : \mathcal{D}\cap\N^{n} \to \N_{0} \cup \{\infty\}  \\
  \phi (d) & = \# \{c \in \mathcal{C}\cap\N^{N} : \pi (c) = d\}
\end{align*}
The preimage of any rational point $d \in \mathcal{D}$ under $\pi$ is
a polyhedron and there are only finitely many combinatorial types of
polyhedra appearing among all preimages (see~\cite{Verdoolaege:2004aa}
for an overview on the history of this result with a focus on
implementation).  The type depends on which supporting hyperplanes of
$\mathcal{C}$ intersect a given preimage $\pi^{(-1)}(d)$
nontrivially.  This yields a decomposition of $\mathcal{D}$ known as
the \emph{chamber decomposition}.  As in each chamber the
combinatorial type of each fiber is the same, the counting function is
a quasi-polynomial since in general the fiber is a rational polytope.
All-together, $\phi$ is a piecewise quasi-polynomial.

In full generality Sturmfels has shown that the lattice point
enumerator of a parametric polyhedron $\{x: Ax \leq b(t)\}$ is a
piecewise quasi-polynomial in the parameters $t$, whenever $b(t) \in
\Z[t]$ is a linear polynomial~\cite{sturmfels1995vector}.  He calls
$\phi$ the \emph{vector partition function} as it counts the number of
ways to write a vector in terms of generators of~$\mathcal{C}$.

\begin{exm}
Fix $d\in\mathcal{D}$ and consider points $kd$, $k\in\N$ on the ray
generated by~$d$.  In this case $\phi(kd)$ equals $P_{\pi^{-1} (d)}
(k)$, the Ehrhart quasi-polynomial of $\pi^{-1}(d)$.  If $\pi^{-1}(d)$
happens to be an integral polytope, then so are the polytopes
$\pi^{-1}(kd)$ and in this case $P_{\pi^{-1}(d)}$ is an honest
polynomial~\cite{ehrhart1977polynomes}.
\end{exm}
\begin{exm}\label{ex:2dcounting}
  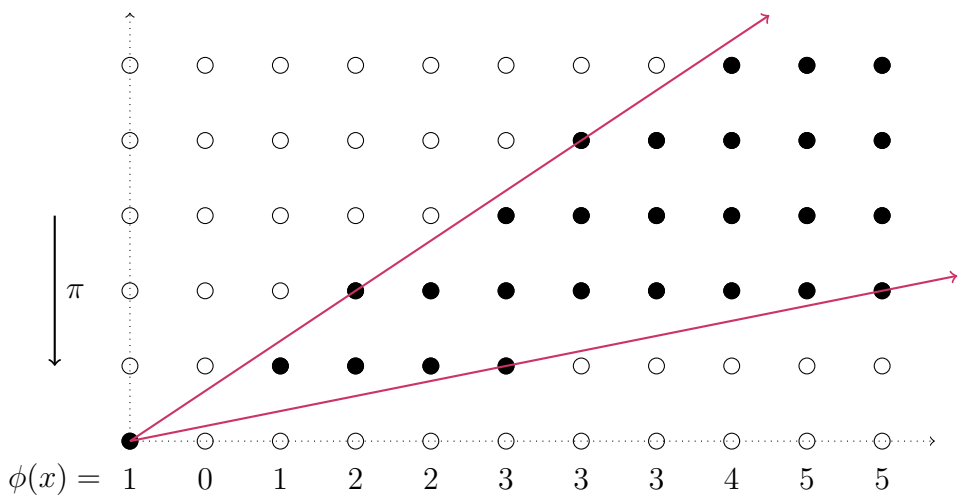
\begin{figure}[htpb]
  \centering
  \begin{tikzpicture}
  \def\cone{purple!80}
  \def\ctwo{green!40}
  \def\cthree{blue!40}

  \draw [->, dotted, thin] (0,0) -- (10.7,0);
  \draw [->, dotted, thin] (0,0) -- (0, 5.7);
  \foreach \i in {0,...,10} {
    \draw [black] (\i,0) circle (3pt);
  }
  \draw [->, black, thick] (-1,3) -- (-1,1) node [pos=0.5, right] {$\pi$};
  \node at (-1,-.5) {$\phi(x) = $};
  \node at (0,-.5) {$1$};
  \node at (1,-.5) {$0$};
  \node at (2,-.5) {$1$};
  \node at (3,-.5) {$2$};
  \node at (4,-.5) {$2$};
  \node at (5,-.5) {$3$};
  \node at (6,-.5) {$3$};
  \node at (7,-.5) {$3$};
  \node at (8,-.5) {$4$};
  \node at (9,-.5) {$5$};
  \node at (10,-.5) {$5$};

  \foreach \i in {0,...,10} {
    \draw [black]
    (\i,1) circle (3pt)
    (\i,2) circle (3pt)
    (\i,3) circle (3pt)
    (\i,4) circle (3pt)
    (\i,5) circle (3pt);
  }

  \filldraw [black] (0,0) circle (3pt);

  \foreach \i in {2,3,4,5} {
    \filldraw [black]
    (\i,1) circle (3pt);
  }
  \foreach \i in {3,...,10} {
    \filldraw [black]
    (\i,2) circle (3pt);
  }
  \foreach \i in {5,...,10} {
    \filldraw [black]
    (\i,3) circle (3pt);
  }
  \foreach \i in {6,...,10} {
    \filldraw [black]
    (\i,4) circle (3pt);
  }
  \foreach \i in {8,...,10} {
    \filldraw [black]
    (\i,5) circle (3pt);
  }

  \draw [->, \cone, thick] (0,0) -- (11,2.2) ;
  \draw [->, \cone, thick] (0,0) -- (8.5, 5.667);
\end{tikzpicture}
\captionsetup{singlelinecheck=off}
\caption[.]{The counting problem in Example~\ref{ex:2dcounting}.
  \label{fig:vectorpartition}}
\end{figure}
Consider the two-dimensional cone $\mathcal{C}$ over the matrix
$(\begin{smallmatrix} 5 & 3 \\ 1 & 2 \end{smallmatrix})$, depicted in
Figure~\ref{fig:vectorpartition}.  Let $\pi$ be the projection to the
first coordinate such that the image cone $\mathcal{D}$ is just the
$x$-axis.  The vector partition function counting the number of
integer points in a vertical slice of $\mathcal{C}$ is the piecewise
quasi-polynomial
\begin{equation*}
  \phi(x) =
  \begin{cases}
  0 &  x < 0, \\
  (x+1) - \lfloor \frac{x+2}{3} \rfloor - \lfloor \frac{x +
  4}{5}\rfloor & x \ge 0.
  \end{cases}
\end{equation*}
Note that in general a quasi-polynomial can be written as a polynomial
expression in the variables and floor functions of linear functions in
the variables.
\end{exm}

Vector partition functions can be computed symbolically.  The first
step is usually to compute the chamber decomposition for which several
algorithms exist.  Once the problem is reduced to determining a
quasi-polynomial for each chamber, interpolation may be the first idea
that comes to mind.  This is known as Clauss' method and was indeed
the first method suggested for the determinations of vector partition
functions.  This method has problems since it can be difficult to find
sufficiently many lattice points for interpolation.  A~more efficient
approach is to use Barvinok's method which, in the setting of vector
partition functions, was first suggested by Verdoolaege et
al.~\cite{verdoolaege2007counting}.  Their software \barvinok
(together with the \isl library) is the most advanced tool available
today.  The software is very well developed because of its
applications in computer science, for instance to loop optimization in
compiler development.  It is one of the mathematical software tools
with which run time is short compared to the time that humans need to
learn something from the result.  The introduction of
\cite{verdoolaege2005experiences} contains many references.

\subsection{Computation of the plethysm coefficient
quasi-polynomials}\label{sec:compute-qpolys}

As a proof of concept we evaluated equation~\eqref{eq:formula} using
\barvinok.  We describe the necessary steps for $d=5$ here.  The input
files for $d=3,4$ are also available on our project homepage.

In~\eqref{eq:formula} the innermost evaluation is the quasi-polynomial
function $Q_\alpha$ which depends on $\alpha$ (a partition of~$d$) and
a permutation $\pi$.  $Q_\alpha$ is a function in~$\lambda$, but
through the series of reductions in Section~\ref{sec:reductions}, the
final formula has different arguments, called $b_1,\dots,b_{d-1},s$
(see Remark~\ref{r:extend}).  By convention, the $b_i$ are ordered
increasingly with $b_1$ the smallest.

\begin{exm}
Suppose we want to determine the multiplicity of the isotypic
component for $\lambda=(3,2,1)$ in~$S^3(S^2)$.  This multiplicity is
equal to the multiplicity of the isotypic component of $(2,1,0)$ in
$\bigwedge^3(S^1)$ (Remark~\ref{r:extend}).  To evaluate it using our
programs, we plug $(b,s) = (1,1)$ into the quasi-polynomial stored in
\verb|111.qpoly|.  We find that the multiplicity is equal to~$0$.  See
the last item in Section~\ref{sec:quirks} for how this $0$ is
presented, though.
\end{exm}

In the following we describe the steps necessary to repeat our
computations and determine the quasi-polynomials
in~\eqref{eq:formula}.  The procedure consists of roughly four steps:
\begin{enumerate}
\item Determine $Q_\alpha$ enumerators with \verb|barvinok_enumerate|.
\item Sum over the partition $\alpha$ using precomputed coefficients
$\chi_\mu(\alpha)D_\alpha$.
\item Sum over permutations $\pi$.
\item Division by $\pm d!$ and postprocessing.
\end{enumerate}

\subsubsection{Enumeration.}  The directory
\verb|barvinok_enumeration| contains input files for the program
\verb|barvinok_enumerate|, corresponding to determination of the
$Q_\alpha$ for different values of $\alpha$ and shifted~$\lambda$.
These files need to be processed individually with the command
\begin{verbatim}
barvinok_enumerate --to-isl < "${input}" > "${input}".result
\end{verbatim}
where \verb|${input}| is a filename of one of the \verb|.barv| files.
We provide the bash script \verb|do.sh| which runs on four parallel
processors for this job.  In our experience, this step, for $d=5$,
should complete in less than 12 hours even on laptop computers.  At
this point one could argue that we are doing a lot of computation that
is not strictly necessary, since the $Q_\alpha (k,\lambda_\pi)$ for
different $\pi$ are the same quasi-polynomials, evaluated at slightly
shifted arguments.  In principle one would like to compute one
quasi-polynomial and evaluate it on shifted arguments.  This point is
legit, but it seemed more convenient using recomputation with modified
constraints.  Due to the parallelization, computing all $Q_\alpha
(k,\lambda_\pi)$ individually by modifying the input was quick.  The
result of this computation are stored in \verb|.result| files which we
need for the next, computationally more demanding, step: summation.

\subsubsection{Summation} At this point the \verb|.result| files
should contain all lattice point enumerators $Q_\alpha(k,\lambda_\pi)$
and our next task is to sum them with appropriate coefficients.  After
some experimentation it turned out to be advantageous to first sum
over the partition $\alpha$ and later over the permutation~$\pi$.  In
the light of Remark~\ref{r:chambers} the reason for this seems to be
the creation of many more small chambers during summation over~$\pi$.
For this step we use \iscc, an interactive (and scriptable) \isl
frontend which is distributed with \barvinok.  Place the result
files in the summation folder which already contains the appropriate
summation scripts.  Their names are \verb|sum11111.iscc| for
$\mu=(11111)$ and so on.  These scripts run for a while.  Here are
some approximate run times that we measured on an Intel Core i7-4770
(3.4GHz):
\begin{center}
\begin{tabular}{|c|c|}
\hline
script & runtime in hours \\\hline\hline
\verb|sum11111.iscc| & 118 \\
\verb|sum2111.iscc| & 38 \\
\verb|sum221.iscc| & 7 \\
\verb|sum311.iscc| & 2 \\
\verb|sum32.iscc| & 7 \\
\verb|sum41.iscc| & 38 \\
\verb|sum5.iscc| & 118 \\
\hline
\end{tabular}
\end{center}
In principle this computation could be parallelized too by structuring
summation hierarchically in the form of a tree.  At the moment \iscc
has no native support for parallelism so the only way to parallelize
this computation would be on the OS level.  This in turn means that
intermediate results have to be written to disk and read again.
Reading large quasi-polynomials is very slow (see
Section~\ref{sec:quirks}), and consequently we ran each summation on
one thread, but different summations at the same time.

The script \verb|sumX.iscc| stores its result in \verb|X.result|.
This file then contains the quasi-polynomial we are looking for,
multiplied with a signed factorial.  In the case of $d=5$, the factor
is $5! = 120$.

\subsubsection{Postprocessing}
\label{sec:postprocessing}
In the final step we divide the quasi-polynomial by the appropriate
factorial and sign and use a text editor to convert the results from
parametric sets of constant functions into functions (see
Section~\ref{sec:quirks}).

\subsection{Experiences and limitations} The results of our
computation are piecewise quasi-polynomials and their representations
are far from unique.  The most basic phenomenon bothering us is that
divisibility conditions may be obfuscated by existential quantifiers.
For one example, if $s$ is a variable, then $s \equiv 0 \mod 5$ may
appear as $\exists e_0 = \lfloor(-1 + s)/5 \rfloor, \exists e_1 =
\lfloor s/5 \rfloor$ such that $5e_1 = s$ and $5e_0 \leq -2 + s$ and
$5e_0 \geq -5 + s$.  To see the equivalence, note that the $e_0$
condition is that $s$ leaves any remainder except 1 modulo 5, and thus
redundant.  At the moment there seems to be no automatic way to remove
such redundant conditions while they do appear frequently (this one is
taken from Example~\ref{ex:case-study}).

Another challenge to be addressed in the future is the number of
chambers that appears after doing arithmetic with quasi-polynomials.
In principle there can be chambers $C_{1},C_{2}$ with corresponding
quasi-polynomials $p_{1},p_{2}$ such that $C_{2}$ is a face of
$C_{1}$, and $p_{1}$ when restricted to $C_{2}$ equals~$p_{2}$.  At
the moment \barvinok has no means to detect this case during the
computation, or rectify it a posteriori.  We can not precisely
estimate how much this effect hits us.  We did run \iscc's
\verb|coalesce| function on each of our results which uses simple
tests to detect empty chambers.  This has reduced the output of the
summation part to approximately a fifth of its original size.

\subsection{Quirks}
\label{sec:quirks}
Using \barvinok and \iscc, the following things occurred to us:
\begin{itemize}
\item It can take very long to read quasi-polynomials from disk.  Our
largest result files are \verb|11111.qpoly| and \verb|5.qpoly| which
on a Core~i7-4770 (3.4GHz) needed 5 hours and 51 minutes to be parsed.
In contrast they need only a second to be written to disk!  We asked
on the \isl development mailing list and it was confirmed that the
parser is not very efficient.
\item Reading a quasi-polynomial and then writing it out again need
not yield the same representation.  The parser that is used to read
piecewise quasi-polynomials from files applies certain transformations
that are not applied when computing the quasi-polynomials from
scratch.
\item Mathematically speaking our results are simply functions $\N^d
\to \N$, but in the computer things are not that simple.  The program
\verb|barvinok_enumerate| which we use as the first step in our
computation does not return functions on $\N^d$---it returns sets of
constant functions, parametrized over~$\N^d$.  It is technically
impossible to ``evaluate'' these parametric sets of constants, because
only functions can be evaluated in \isl.  To fix this we simply used
text editing on the output files to convert expressions like
\begin{verbatim}
[b1, s] -> { [] -> (1/2 * b1 + 1/2 * b1^2) : ... }
\end{verbatim}
into
\begin{verbatim}
{ [b1, s] -> (1/2 * b1 + 1/2 * b1^2) : ... }
\end{verbatim}
\item If the result of a quasi-polynomial evaluation is a nonzero
integer $n$, then the result is formatted as \verb|{n}|.  If, however,
the result is zero, the empty set is returned: \verb|{ }|.
\end{itemize}

\subsection{Evaluation}
Evaluation of explicit plethysm coefficients can be done in
\verb|LiE|~\cite{van1992lie} and other packages like \verb|SAGE|:

\begin{exm}\label{ex:sage}
To evaluate plethysm in \verb|SAGE|~\cite{sage}, first one sets up the
ring of symmetric functions in the Schur basis with
\begin{verbatim}
sage: s = SymmetricFunctions(QQ).schur()
\end{verbatim}
After this the (Schur function) plethysm can be computed by plugging
in as follows:
\begin{verbatim}
sage : s([2,1,1])(s[3,1])
\end{verbatim}
\end{exm}

For both parametric partial and complete evaluation of our stored
results, the most practical tool is \iscc.
\begin{exm}\label{ex:iscc-eval}
In \iscc, to evaluate a quasi-polynomial $P$ (created, for instance
with
\begin{verbatim}
P := read "111.qpoly";
\end{verbatim}
at arguments $(3,2)$ use the following input to \iscc:
\begin{verbatim}
P ({[3,2]});
\end{verbatim}
The $()$ brackets are used to trigger evaluation on an isl domain
introduced with $\{\}$ which in turn consists of only one isolated
point~$[3,2]$.
\end{exm}

\begin{exm}\label{ex:case-study}
In this example we explain how to arrive at the at the result in
Example~\ref{ex:largeeval} using the provided result files.  Note that
by means of the reductions in Remark~\ref{r:extend}, this formula
could in principle also be derived from the Cayley-Sylvester formula
in $SL_2(\C)$ representation theory, but this formula is not as
explicit as ours.  It involves counting tableaux under side
constraints.

Let again $\mu=(5)$, and $\lambda=(31,3,2,2,2)$.  The following code
loads the quasi-polynomial for $\mu$ from the file \verb|5.qpoly| and
evaluates it along the line $s\lambda$ for $s\in\Z$.  Note that the
read command will take very long (up to several hours) since the
parser for quasi-polynomials is not very optimized.
\begin{verbatim}
P:= read "5.qpoly";
{[s] -> [0,0,s, 6*s]} . P;
\end{verbatim}
The result looks like this:
\footnotesize
\begin{verbatim}
$2 := { [s] -> ((((((3/5 - 289/720 * s + 1/20 * s^2 + 1/720 * s^3) + (5/8 + 1/8 * s) *
floor((s)/2)) + (1/3 - 1/6 * s) * floor((s)/3)) + ((7/12 - 1/3 * s) + 1/2 *
floor((s)/3)) * floor((1 + s)/3) + 1/4 * floor((1 + s)/3)^2) + 1/4 * floor((s)/4))
- 1/4 * floor((3 + s)/4)) : exists (e0 = floor((-1 + s)/5): 5e0 = -1 + s and s >= 1);
[s] -> ((((((1 - 289/720 * s + 1/20 * s^2 + 1/720 * s^3) + (5/8 + 1/8 * s) *
floor((s)/2)) + (1/3 - 1/6 * s) * floor((s)/3)) + ((7/12 - 1/3 * s) + 1/2 *
floor((s)/3)) * floor((1 + s)/3) + 1/4 * floor((1 + s)/3)^2) + 1/4 * floor((s)/4)) -
1/4 * floor((3 + s)/4)) : exists (e0 = floor((-1 + s)/5), e1 = floor((s)/5): 5e1 = s
and s >= 5 and 5e0 <= -2 + s and 5e0 >= -5 + s); [s] -> (((((((-4/5 + 289/720 * s -
1/20 * s^2 - 1/720 * s^3) + (-5/8 - 1/8 * s) * floor((s)/2)) + (-1/3 + 1/6 * s) *
floor((s)/3)) + ((-7/12 + 1/3 * s) - 1/2 * floor((s)/3)) * floor((1 + s)/3) - 1/4 *
floor((1 + s)/3)^2) - 1/4 * floor((s)/4)) + 1/4 * floor((3 + s)/4)) * floor((s)/5) +
((((((4/5 - 289/720 * s + 1/20 * s^2 + 1/720 * s^3) + (5/8 + 1/8 * s) * floor((s)/2)) +
(1/3 - 1/6 * s) * floor((s)/3)) + ((7/12 - 1/3 * s) + 1/2 * floor((s)/3)) *
floor((1 + s)/3) + 1/4 * floor((1 + s)/3)^2) + 1/4 * floor((s)/4)) - 1/4 *
floor((3 + s)/4)) * floor((3 + s)/5)) : exists (e0 = floor((-1 + s)/5), e1 =
floor((s)/5): s >= 1 and 5e0 <= -2 + s and 5e0 >= -5 + s and 5e1 <= -1 + s and
5e1 >= -4 + s); [s] -> 1 : s = 0 }
\end{verbatim}
\normalsize To parse this, first observe that a new chamber starts
whenever we see \verb|[s] ->|.  The first step towards understanding
this output is to isolate the four chambers and to reformulate their
constraints.  The constraints are the items after the colon in each
chamber.

\begin{center}
\underline{Chamber 1}
\end{center}
\begin{verbatim}
exists (e_0 = floor((-1 + s)/5): 5e0 = -1 + s and s >= 1)
\end{verbatim}
which means $s\geq 1$ and $s \equiv 1 \mod 5$.

\begin{center}
\underline{Chamber 2}
\end{center}
\begin{verbatim}
exists (e0 = floor((-1 + s)/5), e1 = floor((s)/5):
5e1 = s and s >= 5 and 5e0 <= -2 + s and 5e0 >= -5 + s)
\end{verbatim}
which, except from $s\geq 5$, translates into the requirement that $s$
should leave remainder zero modulo $5$, and additionally $s-5 \leq
5\lfloor\frac{s-1}{5}\rfloor \leq s-2$.  The second condition is that
$s$ leaves any remainder except 1 modulo~5, and thus redundant.  At
the moment our computational tools are unable to carry out this
simplification automatically.

\begin{center}
\underline{Chamber 3}
\end{center}
\begin{verbatim}
exists (e0 = floor((-1 + s)/5), e1 = floor((s)/5): s >= 1 and 5e0
<= -2 + s and 5e0 >= -5 + s and 5e1 <= -1 + s and 5e1 >= -4 + s).
\end{verbatim}
The conditions are $s\geq 1$, $s-5 \leq 5\lfloor\frac{s-1}{5}\rfloor
\leq s-2$, and $s-4 \leq 5\lfloor\frac{s}{5}\rfloor \leq s-1$.  They are
both satisfied if and only if $s$ leaves remainder 2,3, or~4 modulo 5.

\begin{center}
\underline{Chamber 4}
\end{center}
This chamber is singleton: $s=0$ and thus the case distinction is
complete.

The output of our program has each quasi-polynomial written in an
expression involving floor functions.  To simplify the presentation,
let us introduce the following shorthands which appear in the output
\begin{gather*}
p_1 = \frac{3}{5} -  \frac{289}{720} s + \frac{1}{20} s^2 + \frac{1}{720} s^3,\\
p_2 = \frac{5}{8} +  \frac{1}{8} s, \qquad
p_3 = \frac{1}{3} -  \frac{1}{6} s, \qquad
p_4 = \frac{7}{12} - \frac{1}{3} s,  \\
q_1 = 1 - \frac{289}{720} s + \frac{1}{20} s^2 + \frac{1}{720}s^3, \\
r_1 = \frac{4}{5} - \frac{289}{720} s + \frac{1}{20} s^2 + \frac{1}{720} s^3.
\end{gather*}
Using these shorthands and only trivial manipulations of the output we
arrive at the following three quasi-polynomials in the three
nontrivial chambers:
\begin{gather*}
\underline{s \equiv 1 \mod 5}\\
p_1 + p_2 \left\lfloor \frac{s}{2}\right\rfloor + p_3
\left\lfloor\frac{s}{3}\right\rfloor
+ \left( p_4 + \frac{1}{2} \left\lfloor\frac{s}{3}\right\rfloor \right)
\left\lfloor\frac{1+s}{3}\right\rfloor
+ \frac{1}{4} \left( \left\lfloor\frac{1 + s}{3}\right\rfloor^2
+ \left\lfloor\frac{s}{4}\right\rfloor
- \left\lfloor\frac{3 + s}{4}\right\rfloor \right)\\
\underline{s \equiv 0 \mod 5}\\
q_1 + p_2  \left\lfloor\frac{s}{2}\right\rfloor
+ p_3 \left\lfloor \frac{s}{3} \right\rfloor +
\left( p_4 + \frac{1}{2}\left\lfloor \frac{s}{3} \right\rfloor \right)
\left\lfloor\frac{1 + s}{3}\right\rfloor
+ \frac{1}{4} \left( \left\lfloor \frac{1 + s}{3}\right\rfloor^2
+ \left\lfloor \frac{s}{4} \right\rfloor
- \left\lfloor \frac{3 + s}{4}\right\rfloor \right) \\
\underline{s \equiv 2,3,4 \mod 5}\\
r_1 + p_2 \left\lfloor \frac{s}{2} \right\rfloor + p_3 \left\lfloor\frac{s}{3}\right\rfloor +
\left( p_4 + \frac{1}{2} \left\lfloor\frac{s}{3}\right\rfloor \right)
\left\lfloor \frac{1 + s}{3} \right\rfloor
+ \frac{1}{4} \left(
  \left\lfloor \frac{1 + s}{3}\right\rfloor^2
  + \left\lfloor \frac{s}{4}\right\rfloor
  - \left\lfloor \frac{3 + s}{4}\right\rfloor \right)
%
\end{gather*}

There is an obvious pattern here, but unfortunately the \isl engine
has problems with factoring out, or simplifying these expressions
automatically.  For instance in the third chamber it actually returns
the expression
\small
\begin{gather*}
\underline{s \equiv 2,3,4 \mod 5}\\
\left (
r_1 - p_2 \left\lfloor \frac{s}{2} \right\rfloor - p_3 \left\lfloor\frac{s}{3}\right\rfloor -
\left( p_4 + \frac{1}{2} \left\lfloor\frac{s}{3}\right\rfloor \right)
\left\lfloor \frac{1 + s}{3} \right\rfloor
- \frac{1}{4} \left\lfloor \frac{1 + s}{3}\right\rfloor^2
- \frac{1}{4} \left\lfloor \frac{s}{4}\right\rfloor
+ \frac{1}{4} \left\lfloor \frac{3 + s}{4}\right\rfloor \right)
\left\lfloor \frac{s}{5} \right\rfloor + \\
\left(
-r_1 + p_2 \left\lfloor \frac{s}{2} \right\rfloor +
p_3 \left\lfloor\frac{s}{3}\right\rfloor
+ \left( p_4 + \frac{1}{2} \left\lfloor\frac{s}{3}\right\rfloor
\right) \left\lfloor \frac{1 + s}{3} \right\rfloor
+ \frac{1}{4} \left\lfloor \frac{1 + s}{3}\right\rfloor^2 +
\frac{1}{4} \left\lfloor \frac{s}{4} \right\rfloor
- \frac{1}{4} \left\lfloor \frac{3 + s}{4} \right\rfloor
\right) \left\lfloor \frac{3 + s}{5}\right\rfloor.
\end{gather*}
\normalsize Not only can we simplify the presentation, in fact the
above expression looks like the lead term would be of quasi-polynomial
nature while in reality it is not since
\begin{equation*}
\left (\left\lfloor \frac{s}{5} \right\rfloor - \left\lfloor \frac{3 +
    s}{5}\right\rfloor \right) = -1  \quad \text{ if } s \equiv 2,3,4
\mod 5.
\end{equation*}
Applying all simplifications and the shortcuts
\begin{gather*}
p = - \frac{289}{720} s + \frac{1}{20} s^2 + \frac{1}{720} s^3,\\
p_2 = \frac{5}{8} + \frac{1}{8} s, \qquad p_3 = \frac{1}{3} -
\frac{1}{6} s, \qquad p_4 = \frac{7}{12} - \frac{1}{3} s,  \\
A(s) = p + p_2 \left\lfloor \frac{s}{2}\right\rfloor + p_3
\left\lfloor\frac{s}{3}\right\rfloor + \left( p_4 + \frac{1}{2}
  \left\lfloor\frac{s}{3}\right\rfloor \right)
\left\lfloor\frac{1+s}{3}\right\rfloor + \frac{1}{4} \left(
  \left\lfloor\frac{1 + s}{3}\right\rfloor^2 +
  \left\lfloor\frac{s}{4}\right\rfloor - \left\lfloor\frac{3 +
    s}{4}\right\rfloor \right),
\end{gather*}
the final result is
\begin{equation*}
Q(s) = A(s) +
\begin{cases}
1 & \text{ if } s \equiv 0 \mod 5\\
\frac{3}{5} & \text{ if } s \equiv 1 \mod 5\\
\frac{4}{5} & \text{ if } s \equiv 2,3,4 \mod 5.
\end{cases}
\end{equation*}
\end{exm}
\fi

\bibliographystyle{amsalpha}
\bibliography{plethysm}

\end{document}